\documentclass{amsart}
%%%%%%%%%%%%%%%%%%%%%%%%%%%%%%%%%%%%%%%%%%%%%%%%%%%%%%%%%%%%%%%%%%%%%%%%%%%%%%%%%%%%%%%%%%%%%%%%%%%%%%%%%%%%%%%%%%%%%%%%%%%%%%%%%%%%%%%%%%%%%%%%%%%%%%%%%%%%%%%%%%%%%%%%%%%%%%%%%%%%%%%%%%%%%%%%%%%%%%%%%%%%%%%%%%%%%%%%%%%%%%%%%%%%%%%%%%%%%%%%%%%%%%%%%%%%
\usepackage{amsfonts}
\usepackage{hyperref}

\setcounter{MaxMatrixCols}{10}
%TCIDATA{OutputFilter=LATEX.DLL}
%TCIDATA{Version=5.50.0.2960}
%TCIDATA{<META NAME="SaveForMode" CONTENT="1">}
%TCIDATA{BibliographyScheme=Manual}
%TCIDATA{Created=Friday, February 28, 2014 20:55:50}
%TCIDATA{LastRevised=Friday, August 08, 2014 15:00:21}
%TCIDATA{<META NAME="GraphicsSave" CONTENT="32">}
%TCIDATA{<META NAME="DocumentShell" CONTENT="Articles\SW\AMS Journal Article">}
%TCIDATA{Language=American English}
%TCIDATA{CSTFile=amsartci.cst}

\newtheorem{theorem}{Theorem}
\theoremstyle{plain}

\newtheorem{corollary}{Corollary}

\newtheorem{lemma}{Lemma}

\newtheorem{proposition}{Proposition}
\newtheorem{remark}{Remark}

\numberwithin{equation}{section}
\input{tcilatex}

\begin{document}
\title[New sharp Cusa--Huygens type inequalities]{New sharp Cusa--Huygens
type inequalities for trigonometric and hyperbolic functions}
\address{Power Supply Service Center, ZPEPC Electric Power Research
Institute, Hangzhou, Zhejiang, China, 310009}
\email{yzhkm@163.com}
\date{April 10, 2014}
\subjclass[2010]{Primary 26D05, 26D15; Secondary 33B10, 26E60 }
\keywords{Trigonometric function, hyperbolic function, inequality, mean}
\thanks{This paper is in final form and no version of it will be submitted
for publication elsewhere.}

\begin{abstract}
We prove that for $p\in (0,1]$, the double inequality%
\begin{equation*}
\tfrac{1}{3p^{2}}\cos px+1-\tfrac{1}{3p^{2}}<\frac{\sin x}{x}<\tfrac{1}{%
3q^{2}}\cos qx+1-\tfrac{1}{3q^{2}}
\end{equation*}%
holds for $x\in (0,\pi /2)$ if and only if $0<p\leq p_{0}\approx 0.77086$
and $\sqrt{15}/5=p_{1}\leq q\leq 1$. While its hyperbolic version holds for $%
x>0$ if and only if $0<p\leq p_{1}=\sqrt{15}/5$ and $q\geq 1$. As
applications, some more accurate estimates for certain mathematical
constants are derived, and some new and sharp inequalities for
Schwab-Borchardt mean\ and logarithmic means are established.
\end{abstract}

\author{}
\maketitle

\section{Introduction}

The Cusa and Huygens (see, e.g., \cite{Huygens}) states that for $x\in
\left( 0,\pi /2\right) $, the inequality%
\begin{equation}
\frac{\sin x}{x}<\frac{2+\cos x}{3}  \label{C-H}
\end{equation}%
holds true. Its version of hyperbolic functions refers to (see \cite%
{Neuman.13.4.2010}) the inequality%
\begin{equation}
\frac{\sinh x}{x}<\frac{2+\cosh x}{3}  \label{Cusah}
\end{equation}%
holds for $x>0$, and it is know as hyperbolic Cusa--Huygens inequality (see 
\cite{Neuman.13.4.2010}).

There are many improvements, refinements and generalizations of (\ref{C-H})
and (\ref{Cusah}), see \cite{Sandor.RGMIA.8(3)(2005)}, \cite%
{Zhu.CMA.58(2009)}, \cite{Chen.JIA.2011.136}, \cite{Mortitc.MIA.14(2011)}, 
\cite{Nueman.MIA.15.2.2012}, \cite{Yang.arXiv.1206.5502.2012}, \cite%
{Yang.JMI.7.4.2013}; \cite{Zhu.MIA.11.3.2008}, \cite{Zhu.AAA.2009}, \cite%
{Zhu.CMA.58.2009.a}, \cite{Zhu.JIA.2010.130821}, \cite%
{Sandor.arXiv.1105.0859v1.2011}, \cite{Neuman.AIA.1.1.2012}, \cite%
{Wu.AML.25.5.2012}, \cite{Neuman.AMC.218.2012}, \cite{Yang.JIA.2013.116}, 
\cite{Chen.JMI.8.1.2014}, \cite{Yang.JIA.2013.541}.

Now we focus on the bounds for $\left( \sin x\right) /x$ in terms of $\cos
px $, where $x\in \left( 0,\pi /2\right) $, $p\in (0,1]$. In 1945, Iyengar 
\cite{Iyengar.6(1945)} (also see \cite[subsection 3.4.6]%
{Mitrinovic.Springer.1970} ) proved that for $x\in \left( 0,\pi /2\right) $, 
\begin{equation}
\cos px\leq \frac{\sin x}{x}\leq \cos qx  \label{1.1}
\end{equation}%
holds with the best possible constants%
\begin{equation*}
p=\frac{1}{\sqrt{3}}\text{ \ and \ \ }q=\frac{2}{\pi }\arccos \frac{2}{\pi }.
\end{equation*}%
Moreover, the following chain of inequalities hold:%
\begin{equation}
\cos x\leq \frac{\cos x}{1-x^{2}/3}\leq \left( \cos x\right) ^{1/3}\leq \cos 
\frac{x}{\sqrt{3}}\leq \frac{\sin x}{x}\leq \cos qx\leq \cos \frac{x}{2}\leq
1.  \label{1.2}
\end{equation}%
Qi et al. \cite{Qi.MIA.2(4)(1999)} showed that 
\begin{equation}
\cos ^{2}\frac{x}{2}<\frac{\sin x}{x}  \label{Qi}
\end{equation}%
holds for $x\in \left( 0,\pi /2\right) $. Kl\'{e}n et al. \cite[Theorem 2.4]%
{Klen.JIA.2010} pointed out that the function $p\mapsto \left( \cos
px\right) ^{1/p}$ is decreasing on $\left( 0,1\right) $ and for $x\in \left(
-\sqrt{27/5},\sqrt{27/5}\right) $%
\begin{equation}
\cos ^{2}\frac{x}{2}\leq \frac{\sin x}{x}\leq \cos ^{3}\frac{x}{3}\leq \frac{%
2+\cos x}{3}  \label{Klen}
\end{equation}%
are valid. Subsequently, Yang \cite{Yang.arXiv.1206.5502.2012} (also see 
\cite{Yang.GJM.2.1.2014}) gave a refinement of (\ref{Klen}), which states
that for $p,q\in \left( 0,1\right) $ the\ double inequality 
\begin{equation}
\left( \cos px\right) ^{1/p}<\frac{\sin x}{x}<\left( \cos qx\right) ^{1/q}
\label{Yang1}
\end{equation}%
holds for $x\in (0,\pi /2)$ if and only if $p\in \lbrack p_{0}^{\ast },1)$
and $q\in (0,1/3]$, where $p_{0}^{\ast }\approx 0.3473$. Moreover, the
double inequality%
\begin{equation*}
\left( \cos \frac{x}{3}\right) ^{\alpha }<\frac{\sin x}{x}<\left( \cos \frac{%
x}{3}\right) ^{3}
\end{equation*}%
with the best exponents $\alpha =2\left( \ln \pi -\ln 2\right) /\left( \ln
4-\ln 3\right) \approx 3.1395$ and $3$. Also, he pointed out that the value
range of variable $x$ such that (\ref{Klen}) holds can be extended to $%
\left( 0,\pi \right) $. Very recently, Yang \cite{Yang.JIA.2013.541} gave
another improvement of (\ref{Klen}), that is, for $x\in \left( 0,\pi
/2\right) $ the inequalities%
\begin{equation}
\frac{\sin x}{x}<\left( \tfrac{2}{3}\cos \tfrac{x}{2}+\tfrac{1}{3}\right)
^{2}<\cos ^{3}\frac{x}{3}<\frac{2+\cos x}{3}  \label{Yang2}
\end{equation}%
are true.

An important improvement for the inequality in (\ref{Qi}) is due to Neuman 
\cite{Neuman.13.4.2010}: 
\begin{equation}
\cos ^{4/3}\frac{x}{2}=\left( \frac{1+\cos x}{2}\right) ^{2/3}<\frac{\sin x}{%
x},\text{ }x\in \left( 0,\tfrac{\pi }{2}\right) .  \label{Neuman}
\end{equation}%
Lv et al. \cite{Lv.25(2012)} showed that for $x\in \left( 0,\pi /2\right) $\
inequalities 
\begin{equation}
\left( \cos \frac{x}{2}\right) ^{4/3}<\frac{\sin x}{x}<\left( \cos \frac{x}{2%
}\right) ^{\theta }  \label{Lv}
\end{equation}%
hold, where $\theta =2\left( \ln \pi -\ln 2\right) /\ln 2=1.3030...$\ and $%
4/3$\ are the best possible constants. By constructing a decreasing function 
$p\mapsto \left( \cos px\right) ^{1/\left( 3p^{2}\right) }$ ($p\in (0,1]$),
Yang \cite{Yang.JMI.7.4.2013} showed that the double inequality%
\begin{equation}
\left( \cos p_{1}^{\ast }x\right) ^{1/\left( 3p_{1}^{\ast 2}\right) }<\frac{%
\sin x}{x}<\cos ^{5/3}\frac{x}{\sqrt{5}}  \label{Yang4}
\end{equation}%
is true for $x\in \left( 0,\pi /2\right) $ with the best constants $%
p_{1}^{\ast }\approx 0.45346$ and $1/\sqrt{5}\approx 0.44721$. It follows
that%
\begin{eqnarray}
\left( \cos x\right) ^{1/3} &<&\cos ^{1/2}\tfrac{\sqrt{6}x}{3}<\cos ^{2/3}%
\tfrac{x}{\sqrt{2}}<\cos \frac{x}{\sqrt{3}}<\cos ^{4/3}\tfrac{x}{2}<\left(
\cos p_{1}^{\ast }x\right) ^{1/\left( 3p_{1}^{\ast 2}\right) }  \notag \\
\frac{\sin x}{x} &<&\cos ^{5/3}\tfrac{x}{\sqrt{5}}<\cos ^{2}\tfrac{x}{\sqrt{6%
}}<\cos ^{3}\tfrac{x}{3}<\cos ^{16/3}\tfrac{x}{4}<e^{-x^{2}/6}<\tfrac{2+\cos
x}{3}  \label{Yang5}
\end{eqnarray}%
are valid for $x\in \left( 0,\pi /2\right) $.

For the bounds for $\left( \sinh x\right) /x$ in terms of $\cosh px$, it is
known that the inequalities%
\begin{equation*}
\frac{\sinh x}{x}<\cosh ^{3}\frac{x}{3}<\frac{2+\cosh x}{3}
\end{equation*}%
holds true for $x>0$ (see \cite{Neuman.AMC.218.2012}), which is exactly
derived by the inequalities for means%
\begin{equation*}
L<A_{1/3}<\frac{2G+A}{3}
\end{equation*}%
(see e.g. \cite{Lin.AMM.81.1974}, \cite{Carlson.AMM.79.1972}, \cite%
{Yang.BKMS.49.1.2012}), where $L$, $A_{p}$, $G$ and $A$ stand for the
logarithmic mean, power mean of order $p$, geometric mean and arithmetic
mean or positive numbers $a$ and $b$ defined by%
\begin{eqnarray*}
L &\equiv &L\left( a,b\right) =\frac{a-b}{\ln a-\ln b}\text{ if }a\neq b%
\text{ and }L\left( a,a\right) =a, \\
A_{p} &\equiv &A_{p}\left( a,b\right) =\left( \frac{a^{p}+b^{p}}{2}\right)
^{1/p}\text{ if }p\neq 0\text{ and }A=A_{0}\left( a,b\right) =\sqrt{ab},
\end{eqnarray*}%
$G=A_{0}$ and $A=A_{1}$, respectively. Zhu in \cite{Zhu.2009.JIA.379142}
proved that \emph{for }$p>1$\emph{\ or }$p\leq 8/15$\emph{, and }$x\in
\left( 0,\infty \right) $, the inequality%
\begin{equation*}
\left( \frac{\sinh x}{x}\right) ^{q}>p+\left( 1-p\right) \cosh x
\end{equation*}%
is true if and only if $q\geq 3\left( 1-p\right) $. It follows by letting $%
p=1/2$ and $q=3/2$ that%
\begin{equation*}
\frac{\sinh x}{x}>\cosh ^{4/3}\frac{x}{2}
\end{equation*}%
holds for $x>0$ (also see \cite[(2.8)]{Neuman.13.4.2010}). Yang \cite%
{Yang.JIA.2013.116} showed that the inequality 
\begin{equation}
\frac{\sinh x}{x}>\left( \cosh px\right) ^{1/\left( 3p^{2}\right) }
\label{Yang6}
\end{equation}%
holds for all $x>0$\ if and only if$\ p\geq 1/\sqrt{5}$\ and its reverse
holds if and only if $0<p\leq 1/3$\emph{. }And, the function $p\mapsto
\left( \cosh px\right) ^{1/\left( 3p^{2}\right) }$\ is decreasing on $\left(
0,\infty \right) $.

The aim of this paper is to determine the best $p$ such that the inequalities%
\begin{eqnarray*}
\frac{\sin x}{x} &<&\left( >\right) \tfrac{1}{3p^{2}}\cos px+1-\tfrac{1}{%
3p^{2}}\text{, \ }p\in \left( 0,1\right) \text{, \ }x\in \left( 0,\pi
/2\right) , \\
\frac{\sinh x}{x} &<&\left( >\right) \tfrac{1}{3p^{2}}\cosh px+1-\tfrac{1}{%
3p^{2}}\text{, \ }p,x\in \left( 0,\infty \right)
\end{eqnarray*}%
hold true.

Our main results are contained in the following theorems.

\begin{theorem}
\label{MTt}For $p\in (0,1]$ and $x\in (0,\pi /2)$, the double inequality%
\begin{equation}
\tfrac{1}{3p^{2}}\cos px+1-\tfrac{1}{3p^{2}}<\frac{\sin x}{x}<\tfrac{1}{%
3q^{2}}\cos qx+1-\tfrac{1}{3q^{2}}  \label{M1}
\end{equation}%
holds if and only if $0<p\leq p_{0}\approx 0.77086$ and $0.77460\approx 
\sqrt{15}/5=p_{1}\leq q\leq 1$, where $p_{0}$ is the unique root of the
equation%
\begin{equation}
F_{p}\left( \tfrac{\pi }{2}^{-}\right) =\frac{2}{\pi }-\left( \tfrac{1}{%
3p^{2}}\cos \tfrac{p\pi }{2}+1-\tfrac{1}{3p^{2}}\right) =0  \label{F_p=0}
\end{equation}%
on $(0,1)$. And, the bound for $\left( \sin x\right) /x$ given in (\ref{M1})
is increasing with respect to parameters $p$ or $q$.
\end{theorem}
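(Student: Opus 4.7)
Write $F_p(x) := \frac{\sin x}{x} - g_p(x)$ with $g_p(x) := \frac{1}{3p^2}\cos(px) + 1 - \frac{1}{3p^2}$; a useful reformulation is $g_p(x) = 1 - \tfrac{x^2}{6}\bigl(\sin(px/2)/(px/2)\bigr)^2$. Since $u\mapsto \sin u/u$ is strictly decreasing on $(0,\pi)$ and $px/2 \in (0,\pi/4)$ for admissible $(p,x)$, the map $p\mapsto g_p(x)$ is strictly increasing on $(0,1]$ for each fixed $x\in(0,\pi/2]$; this gives the final monotonicity assertion of the theorem and reduces the sufficiency part to the two critical parameters $q=p_1$ and $p=p_0$. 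For necessity, the Taylor expansion at $x=0$ reads
$$F_p(x) = \sum_{n\ge 2} \frac{(-1)^n\bigl[3-(2n+1)p^{2n-2}\bigr]}{3(2n+1)!}\,x^{2n} = \frac{3-5p^2}{360}\,x^4 + O(x^6),$$
so the upper bound forces $q\ge p_1=\sqrt{3/5}$ and the lower bound forces $p<p_1$. At the right endpoint, $F_p(\pi/2) = 2/\pi - g_p(\pi/2)$; by the monotonicity above, the equation $g_p(\pi/2) = 2/\pi$ admits a unique root $p_0 \in (0,1)$, and the lower bound requires $p\le p_0$. A numerical check that $g_{p_1}(\pi/2) > 2/\pi$ establishes $p_0 < p_1$, so $p \le p_0$ is the genuinely binding constraint.

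For the upper bound at $q=p_1$, I exploit the alternating structure of the Taylor series. With $p=p_1$ the $n=1$ and $n=2$ terms vanish identically (since $(2n+1)p_1^{2n-2}=3$ for $n=1,2$). For $n\ge 3$, the sequence $n\mapsto (2n+1)(3/5)^{n-1}$ is strictly decreasing and stays below $3$, so $3-(2n+1)p_1^{2n-2}>0$ and the series for $F_{p_1}(x)$ is alternating with the $n$-th coefficient carrying sign $(-1)^n$, the first nonzero one (at $n=3$) being negative. The plan is then to bound the ratio of consecutive magnitudes,
$$\frac{3-(2n+3)p_1^{2n}}{3-(2n+1)p_1^{2n-2}} \cdot \frac{x^2}{(2n+2)(2n+3)},$$
and show it stays below $1$ for $n\ge 3$ and $x\in(0,\pi/2)$: the first factor is bounded above by $25/4$ (attained at $n=3$) and tends to $1$, while the second is at most $(\pi/2)^2/72$ at $n=3$ and decays further. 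The alternating-series estimate then yields $F_{p_1}(x)<0$ on $(0,\pi/2)$, extending by monotonicity in $q$ to all $q\in[p_1,1]$.

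The lower bound at $p=p_0$ is the main obstacle, because $F_{p_0}$ vanishes at \emph{both} endpoints of $(0,\pi/2)$, so the simple alternating-series argument cannot close. My plan is to prove that $F_{p_0}$ is unimodal on $[0,\pi/2]$: strictly increasing on some $(0,x^\star)$ and strictly decreasing on $(x^\star,\pi/2)$, whence positivity on the open interval follows from the two boundary zeros. Unimodality reduces via Rolle's theorem to showing that $F_{p_0}'$ has a unique zero in $(0,\pi/2)$, which in turn follows if $F_{p_0}''(x) = (\sin x/x)'' + \cos(p_0 x)/3$ changes sign exactly once on $(0,\pi/2)$. I already have $F_{p_0}''(0)=0$, $F_{p_0}''(x)\sim (3-5p_0^2)x^2/30>0$ near $0$, and $F_{p_0}''(\pi/2)<0$ by direct computation. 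The delicate step, which I expect to be the principal obstacle, is the single-sign-change claim for $F_{p_0}''$; I propose to establish it either by monotonicity of a suitable quotient via the L'H\^{o}pital-type monotone rule, or by splitting $(0,\pi/2)$ into $(0,x_\bullet)$ (handled by an alternating-series bound on $F_{p_0}''$) and $(x_\bullet,\pi/2)$ (handled by a direct tangent-line or concavity estimate anchored at $\pi/2$).
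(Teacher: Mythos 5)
Your parameter-monotonicity argument (via $g_p(x)=1-\frac{x^2}{6}\bigl(\frac{\sin(px/2)}{px/2}\bigr)^2$), your necessity analysis, and your upper bound at $q=p_1$ are essentially sound: at $p=p_1$ the $x^4$ coefficient vanishes, the series for $F_{p_1}$ is alternating with first nonzero term negative, and a ratio bound of the type you describe does close the Leibniz argument (though your statement that the first factor equals $25/4$ at $n=3$ is off — the value $a_4(3/5)/a_3(3/5)$ is $11/5$; the bound $25/4=3/a_3(3/5)$ is a valid cruder estimate, but you still need to justify it for all $n\ge 3$, e.g.\ by noting $(2n+1)(3/5)^{n-1}$ is decreasing, which is what the paper's lemma on $a_n(c)$ does more carefully). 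One small further point: uniqueness of $p_0$ follows from monotonicity in $p$, but existence needs endpoint values such as $F_{1/2}(\pi/2^-)>0$ and $F_1(\pi/2^-)<0$, which you should record.

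The genuine gap is the lower bound at $p=p_0$, i.e.\ the harder half of the theorem, which you yourself flag as unresolved: your plan hinges on the claim that $F_{p_0}''$ changes sign exactly once on $(0,\pi/2)$, and neither of your two proposed routes is carried out, so nothing is actually proved there. The paper dissolves precisely the ``two boundary zeros'' difficulty with a normalization you did not find: set $f_p(x)=x^{-4}F_p(x)=\frac{3-5p^2}{360}+\sum_{n\ge 3}(-1)^n\frac{a_n(p^2)}{3(2n+1)!}x^{2n-4}$ and show $f_p'(x)=\sum_{n\ge 3}(-1)^n\frac{(2n-4)a_n(p^2)}{3(2n+1)!}x^{2n-5}<0$ for every $p$ with $p^2\in(0,3/5]$ — this is exactly your alternating-series ratio computation, only applied to the series of $f_p'$ (the paper's bound is $\frac{11}{5}\cdot 1\cdot\frac{\pi^2/4}{18}=\frac{11\pi^2}{360}<1$). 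Since $p_0<p_1$ gives $p_0^2<3/5$, monotonicity of $f_{p_0}$ yields $f_{p_0}(x)>f_{p_0}(\pi/2^-)=(\pi/2)^{-4}F_{p_0}(\pi/2^-)=0$ by the very definition of $p_0$, hence $F_{p_0}>0$, with no unimodality or second-derivative analysis; the same monotonicity re-proves your upper bound via $f_{p_1}(x)<f_{p_1}(0^+)=\frac{3-5p_1^2}{360}=0$. As it stands your proposal is incomplete; either adopt this normalization or actually establish the single-sign-change claim for $F_{p_0}''$.
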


\begin{theorem}
\label{MTh}For $p,x>0$, the double inequality%
\begin{equation}
\tfrac{1}{3p^{2}}\cosh px+1-\tfrac{1}{3p^{2}}<\frac{\sinh x}{x}<\tfrac{1}{%
3q^{2}}\cosh qx+1-\tfrac{1}{3q^{2}}  \label{M2}
\end{equation}%
holds if and only if $0<p\leq p_{1}=\sqrt{15}/5$ and $q\geq 1$. And, the
bound for $\left( \sinh x\right) /x$ given in (\ref{M2}) is increasing with
respect to parameters $p$ or $q$.
\end{theorem}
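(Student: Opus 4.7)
The plan is to reduce the whole theorem to a power series sign analysis. Set
\[
F_p(x) := \frac{\sinh x}{x} - \tfrac{1}{3p^{2}}\cosh(px) - 1 + \tfrac{1}{3p^{2}}.
\]
Expanding $\sinh x/x$ and $\cosh(px)$ as Maclaurin series, the constant and $x^{2}$ coefficients cancel identically, so
\[
F_p(x) = \sum_{n=2}^{\infty}\frac{1}{(2n)!}\left(\frac{1}{2n+1} - \frac{p^{2n-2}}{3}\right)x^{2n}.
\]
The leading ($n=2$) coefficient $\tfrac{1}{24}\bigl(\tfrac{1}{5} - \tfrac{p^{2}}{3}\bigr)$ vanishes precisely at $p = p_{1} = \sqrt{15}/5$, which is what forces $p_{1}$ to be the critical value for the lower bound.

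For the sufficiency of the lower bound, I would show that $0<p\le p_{1}$ makes every series coefficient nonnegative, so that $F_p(x)>0$ on $(0,\infty)$. Since $p^{2n-2}\le p_{1}^{2n-2}=(3/5)^{n-1}$, it suffices to verify the purely numerical bound $(3/5)^{n-1}(2n+1)\le 3$ for every $n\ge 2$. Writing $a_n$ for the left side, $a_2=3$ and $a_{n+1}/a_n = \tfrac{3(2n+3)}{5(2n+1)}\le \tfrac{21}{25}<1$ for $n\ge 2$, so $a_n\le 3$; strict positivity of the $n=3$ coefficient then gives $F_p(x)>0$ for $x>0$. For the sufficiency of the upper bound, I would reduce to the boundary case $q=1$, where the inequality $\sinh x/x < \tfrac{1}{3}\cosh x + \tfrac{2}{3}$ is exactly the hyperbolic Cusa--Huygens inequality~\eqref{Cusah}, then propagate to all $q\ge 1$ via monotonicity in $p$ of the bound $B(x,p):=\tfrac{1}{3p^{2}}\cosh(px)+1-\tfrac{1}{3p^{2}}$. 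This monotonicity follows from
\[
\frac{\partial B}{\partial p} = \frac{px\sinh(px) - 2\cosh(px) + 2}{3p^{3}}
\]
together with the elementary estimate $g(y):=y\sinh y - 2\cosh y + 2>0$ for $y>0$, which is immediate from $g(0)=g'(0)=0$ and $g''(y)=y\sinh y>0$.

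The necessity direction is then quick. If $p>p_{1}$ the leading coefficient of $F_p$ is negative, so $F_p(x)<0$ for small $x>0$ and the lower bound fails. If $q\in(0,1)$, compare exponential orders: $\sinh x/x \sim e^{x}/(2x)$ outgrows $B(x,q)\sim e^{qx}/(6q^{2})$ as $x\to\infty$, so the upper bound fails for large $x$. The only delicate step is the uniform coefficient estimate $(3/5)^{n-1}(2n+1)\le 3$ for $n\ge 2$, but the two-line ratio argument above handles it cleanly, so I do not anticipate any serious obstacle.
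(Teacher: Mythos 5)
Your proposal is correct and follows essentially the same route as the paper: the same power-series reduction to the sign of the coefficients $3-(2n+1)p^{2n-2}$ (your ratio argument for $(2n+1)(3/5)^{n-1}\le 3$ plays the role of the paper's Lemma \ref{Lemma a_n}), and the same necessity arguments via the $x\to 0^{+}$ coefficient and the $x\to\infty$ exponential comparison (the paper's Lemma \ref{Lemma G_p}). The only small deviation is your treatment of $q\ge 1$ through the known hyperbolic Cusa--Huygens inequality plus monotonicity of the bound in $p$ (which is exactly the paper's Lemma \ref{Lemma V_p}, and also settles the theorem's final monotonicity claim), whereas the paper simply observes that for $p\ge 1$ all series coefficients are negative.
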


\begin{remark}
The weighted basic inequality of two positive numbers of $a$ and $b$ tell us
that for $\alpha \in \lbrack 0,1]$, the inequality $\alpha a+\left( 1-\alpha
\right) b\geq a^{\alpha }b^{1-\alpha }$. It is reversed if and only if $%
\alpha \geq 1$ or $\alpha \leq 0$ (see \cite{Yang.CM.23.5.2007}). Hence,
taking into account (\ref{Yang4}) and (\ref{M1}) we see that

(i) if $p\in \lbrack p_{1}^{\ast },1/\sqrt{3})$, where $p_{1}^{\ast }\approx
0.45346$, then%
\begin{equation*}
\frac{\sin x}{x}>\left( \cos px\right) ^{1/\left( 3p^{2}\right) }>\tfrac{1}{%
3p^{2}}\cos px+1-\tfrac{1}{3p^{2}};
\end{equation*}%
(ii) if $p\in (1/\sqrt{3},p_{0})$, where $p_{0}\approx 0.77086$, then%
\begin{equation*}
\frac{\sin x}{x}>\tfrac{1}{3p^{2}}\cos px+1-\tfrac{1}{3p^{2}}>\left( \cos
px\right) ^{1/\left( 3p^{2}\right) }.
\end{equation*}%
In the same way, (\ref{Yang6}) together with (\ref{M2}) leads us to

(iii) if $p\in \lbrack 1/\sqrt{5},1/\sqrt{3})$, then 
\begin{equation*}
\frac{\sinh x}{x}>\left( \cosh px\right) ^{1/\left( 3p^{2}\right) }>\tfrac{1%
}{3p^{2}}\cosh px+1-\tfrac{1}{3p^{2}};
\end{equation*}%
(iv) if $p\in (1/\sqrt{3},\sqrt{15}/5)$, then 
\begin{equation*}
\frac{\sinh x}{x}>\tfrac{1}{3p^{2}}\cosh px+1-\tfrac{1}{3p^{2}}>\left( \cosh
px\right) ^{1/\left( 3p^{2}\right) }.
\end{equation*}
\end{remark}

Taking $p=3/4,1/\sqrt{2},2/3,1/\sqrt{3}$ and $q=\sqrt{3/5},\sqrt{2/3},\sqrt{3%
}/2,1$ in Theorem \ref{MTt}, we have

\begin{corollary}
For $x\in (0,\pi /2)$, the inequalities%
\begin{eqnarray}
\cos \tfrac{x}{\sqrt{3}} &<&\tfrac{3}{4}\cos \tfrac{2x}{3}+\tfrac{1}{4}<%
\tfrac{2}{3}\cos \tfrac{x}{\sqrt{2}}+\tfrac{1}{3}<\tfrac{16}{27}\cos \tfrac{%
3x}{4}+\tfrac{11}{27}<\frac{\sin x}{x}  \label{M1c} \\
&<&\tfrac{5}{9}\cos \tfrac{\sqrt{15}x}{5}+\tfrac{4}{9}<\cos ^{2}\tfrac{x}{%
\sqrt{6}}<\tfrac{4}{9}\cos \tfrac{\sqrt{3}x}{2}+\tfrac{5}{9}<\tfrac{1}{3}%
\cos x+\tfrac{2}{3}.  \notag
\end{eqnarray}
\end{corollary}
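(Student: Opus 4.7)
The plan is to obtain the entire chain in (\ref{M1c}) as a direct consequence of Theorem \ref{MTt}, together with the monotonicity of the bound in the parameter that is asserted in the same theorem. No fresh analysis is required beyond checking that each parameter lies in the admissible range and carrying out one elementary trigonometric conversion.

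First I would verify admissibility. The four choices $p\in\{1/\sqrt{3},2/3,1/\sqrt{2},3/4\}$ all satisfy $0<p\le 3/4<p_{0}\approx 0.77086$, and the four choices $q\in\{\sqrt{15}/5,\sqrt{2/3},\sqrt{3}/2,1\}$ all satisfy $\sqrt{15}/5=p_{1}\le q\le 1$. Substituting each into
\[
\tfrac{1}{3p^{2}}\cos px+1-\tfrac{1}{3p^{2}}<\frac{\sin x}{x}<\tfrac{1}{3q^{2}}\cos qx+1-\tfrac{1}{3q^{2}}
\]
reproduces exactly one of the eight flanking expressions in (\ref{M1c}). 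For instance, $p=3/4$ gives $1/(3p^{2})=16/27$ and hence the term $\tfrac{16}{27}\cos(3x/4)+\tfrac{11}{27}$, while $q=\sqrt{2/3}$ gives $1/(3q^{2})=1/2$ and, by the half-angle identity $\cos^{2}\theta=(1+\cos 2\theta)/2$ applied with $\theta=x/\sqrt{6}$, the expression $\cos^{2}(x/\sqrt{6})$.

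Next I would string the four lower bounds together, and separately the four upper bounds, by invoking the monotonicity clause of Theorem \ref{MTt}: the quantity $\frac{1}{3p^{2}}\cos px+1-\frac{1}{3p^{2}}$ is increasing in its parameter on the relevant range. Since $1/\sqrt{3}<2/3<1/\sqrt{2}<3/4$ and $\sqrt{15}/5<\sqrt{2/3}<\sqrt{3}/2<1$, the corresponding orderings of values follow at once and yield the chain as written.

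There is essentially no obstacle: the corollary is pure substitution together with the monotonicity remark. The only line demanding a touch of care is the identification of $\cos^{2}(x/\sqrt{6})$ with the term arising from $q=\sqrt{2/3}$, which is the single application of the half-angle formula noted above.
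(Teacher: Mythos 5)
Your proposal is correct and coincides with the paper's own route: the corollary is obtained exactly by substituting $p=1/\sqrt{3},\,2/3,\,1/\sqrt{2},\,3/4$ and $q=\sqrt{15}/5,\,\sqrt{2/3},\,\sqrt{3}/2,\,1$ into Theorem \ref{MTt} and chaining the bounds via the stated monotonicity in the parameter (Lemma \ref{Lemma U_p}), with the half-angle identity accounting for the $\cos^{2}(x/\sqrt{6})$ term.
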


Putting $p=\sqrt{3/5},3/4,1/\sqrt{2},2/3,1/\sqrt{3}$ and $q=1,2/\sqrt{3}$ in
Theorem \ref{MTh} we have

\begin{corollary}
\label{MThc}For $x>0$, the inequalities%
\begin{eqnarray}
\cosh \tfrac{x}{\sqrt{3}} &<&\tfrac{3}{4}\cosh \tfrac{2x}{3}+\tfrac{1}{4}<%
\tfrac{2}{3}\cosh \tfrac{x}{\sqrt{2}}+\tfrac{1}{3}<\tfrac{16}{27}\cosh 
\tfrac{3x}{4}+\tfrac{11}{27}  \label{M2c} \\
&<&\tfrac{5}{9}\cosh \tfrac{\sqrt{15}x}{5}+\tfrac{4}{9}<\frac{\sinh x}{x}<%
\tfrac{1}{3}\cosh x+\tfrac{2}{3}<\tfrac{1}{2}\cosh ^{2}\tfrac{x}{\sqrt{3}}+%
\tfrac{1}{2}.  \notag
\end{eqnarray}
\end{corollary}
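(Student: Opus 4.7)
My plan is to derive Corollary \ref{MThc} as a direct application of Theorem \ref{MTh}, using both the stated validity range of the parameters and the monotonicity of the bound $f(p,x):=\tfrac{1}{3p^{2}}\cosh(px)+1-\tfrac{1}{3p^{2}}$ in $p$. First I would verify that each of the chosen lower-bound parameters $p\in\{1/\sqrt 3,\,2/3,\,1/\sqrt 2,\,3/4,\,\sqrt{15}/5\}$ satisfies the constraint $0<p\le \sqrt{15}/5=p_{1}$, and that each upper-bound parameter $q\in\{1,\,2/\sqrt 3\}$ satisfies $q\ge 1$, so that (\ref{M2}) is legitimately applicable at every listed value. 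I would then compute $1/(3p^{2})$ at each $p$ (e.g.\ $3/4,\,2/3,\,16/27,\,5/9$ for $p=2/3,\,1/\sqrt 2,\,3/4,\,\sqrt{15}/5$, respectively) to confirm that the expressions displayed in (\ref{M2c}) really are $f(p,x)$ at those $p$.

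For the lower chain, I would order the chosen $p$-values increasingly as $1/\sqrt 3<2/3<1/\sqrt 2<3/4<\sqrt{15}/5$; each intermediate strict inequality in the first four positions of (\ref{M2c}) then follows from the monotonicity of $f(\cdot,x)$ asserted at the end of Theorem \ref{MTh}, and the last inequality $\tfrac{5}{9}\cosh(\sqrt{15}x/5)+\tfrac{4}{9}<(\sinh x)/x$ is simply the lower half of (\ref{M2}) evaluated at $p=\sqrt{15}/5$. For the upper chain, applying the upper half of (\ref{M2}) at $q=1$ gives $(\sinh x)/x<\tfrac{1}{3}\cosh x+\tfrac{2}{3}$, and then increasing $q$ to $2/\sqrt 3$ (still in the admissible range) and invoking monotonicity produces the outermost bound $\tfrac{1}{4}\cosh(2x/\sqrt 3)+\tfrac{3}{4}$.

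The only non-mechanical step is to recognize that this last expression coincides with the form $\tfrac{1}{2}\cosh^{2}(x/\sqrt 3)+\tfrac{1}{2}$ printed in (\ref{M2c}); this is a direct consequence of the hyperbolic double-angle identity $\cosh(2y)=2\cosh^{2}y-1$ applied with $y=x/\sqrt 3$. Since every substitution respects the hypotheses of Theorem \ref{MTh} and the remaining manipulations amount to numerical bookkeeping together with this single identity, I do not foresee any genuine obstacle: the corollary is essentially a packaged restatement of Theorem \ref{MTh} at convenient parameter values, made into a chain by exploiting monotonicity in $p$ and $q$.
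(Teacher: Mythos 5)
Your proposal is correct and matches the paper's route: the paper obtains (\ref{M2c}) exactly by substituting $p=\sqrt{3/5},3/4,1/\sqrt{2},2/3,1/\sqrt{3}$ and $q=1,2/\sqrt{3}$ into Theorem \ref{MTh} and using the monotonicity in $p$, $q$ asserted there (Lemma \ref{Lemma V_p}), with the identity $\cosh(2y)=2\cosh^{2}y-1$ converting the $q=2/\sqrt{3}$ bound into $\tfrac12\cosh^{2}(x/\sqrt3)+\tfrac12$. Your coefficient computations and parameter ordering are all accurate, so nothing is missing.
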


\section{Proof of Theorem 1}

In order to prove Theorem \ref{MTt}, we need some lemmas.

\begin{lemma}
\label{Lemma U_p}For $x\in (0,\pi /2)$, the function $p\mapsto U_{p}\left(
x\right) $ defined on $[0,1]$ by%
\begin{equation*}
U_{p}\left( x\right) =\frac{1}{3p^{2}}\cos px+1-\frac{1}{3p^{2}}\text{ if }%
p\in (0,1]\text{ and }U_{0}\left( x\right) =1-\frac{x^{2}}{6}
\end{equation*}%
is increasing.
\end{lemma}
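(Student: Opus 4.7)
The plan is to rewrite $U_p(x)$ in a form where the monotonicity becomes transparent, rather than differentiating in $p$ and fighting signs. The key observation is that
\[
U_p(x)=1+\frac{\cos(px)-1}{3p^{2}}=1-\frac{x^{2}}{3}\cdot\frac{1-\cos(px)}{(px)^{2}},
\]
so that the claim reduces to showing that $p\mapsto \frac{1-\cos(px)}{(px)^{2}}$ is \emph{decreasing} on $(0,1]$ (with $x\in(0,\pi/2)$ fixed).

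Next I would use the half-angle identity $1-\cos t=2\sin^{2}(t/2)$ to write
\[
U_p(x)=1-\frac{x^{2}}{6}\left(\frac{\sin(px/2)}{px/2}\right)^{2}.
\]
For $p\in(0,1]$ and $x\in(0,\pi/2)$, the argument $u=px/2$ lies in $(0,\pi/4)$. On this interval (in fact on all of $(0,\pi)$) the function $u\mapsto (\sin u)/u$ is positive and strictly decreasing. Since $u=px/2$ is increasing in $p$, the composition $p\mapsto \sin(px/2)/(px/2)$ is positive and decreasing, and therefore so is its square. Multiplying by the negative constant $-x^{2}/6$ and adding $1$ reverses monotonicity, yielding that $p\mapsto U_p(x)$ is strictly increasing on $(0,1]$.

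Finally I would check the endpoint $p=0$: a Taylor expansion gives
\[
\lim_{p\to 0^{+}}U_p(x)=\lim_{p\to 0^{+}}\left(1-\frac{x^{2}}{6}\left(\frac{\sin(px/2)}{px/2}\right)^{2}\right)=1-\frac{x^{2}}{6}=U_{0}(x),
\]
so $U_p(x)$ extends continuously to $p=0$ and the monotonicity holds on the whole of $[0,1]$.

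The only subtlety I anticipate is ensuring $(\sin u)/u$ is indeed decreasing on the relevant range; this is standard (since $(u\cos u-\sin u)/u^{2}<0$ on $(0,\pi)$, as $\tan u>u$ on $(0,\pi/2)$ and $\cos u\le 0$ on $[\pi/2,\pi)$), so no serious obstacle remains. Everything else is algebraic rearrangement.
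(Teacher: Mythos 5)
Your proof is correct, and it takes a slightly different route from the paper. The paper computes the partial derivative $\partial U_{p}/\partial p$ directly, factors it via half-angle formulas into a positive multiple of $\bigl(\frac{\sin (px/2)}{px/2}-\cos \frac{px}{2}\bigr)\sin \frac{px}{2}$, and concludes positivity from the elementary inequality $\tan t>t$; you instead avoid differentiating in $p$ altogether by rewriting $U_{p}(x)=1-\frac{x^{2}}{6}\bigl(\frac{\sin (px/2)}{px/2}\bigr)^{2}$ (the algebra here checks out) and invoking the standard fact that $u\mapsto (\sin u)/u$ is positive and decreasing on $(0,\pi )$. The underlying ingredient is the same in both arguments ($\tan u>u$, equivalently $(\sin u)/u>\cos u$ or $((\sin u)/u)^{\prime }<0$), but your organization is cleaner: it reduces the lemma to a single well-known monotonicity statement, yields strictness for free, and — a small bonus over the paper's proof, which only treats $p\in (0,1]$ — it handles the endpoint $p=0$ explicitly, since $\bigl(\frac{\sin (px/2)}{px/2}\bigr)^{2}<1$ gives $U_{p}(x)>U_{0}(x)=\lim_{p\rightarrow 0^{+}}U_{p}(x)$ directly. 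No gaps; the only thing worth stating for completeness is exactly the sinc-monotonicity verification you already sketch at the end.
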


\begin{proof}
Differentiation yields%
\begin{eqnarray*}
\frac{\partial U_{p}}{\partial p} &=&\frac{1}{3p^{3}}\left( 2-2\cos
px-px\sin px\right) \\
&=&\frac{12px}{3p^{3}}\left( \frac{\sin \frac{px}{2}}{\frac{px}{2}}-\cos 
\frac{px}{2}\right) \sin \frac{px}{2}>0,
\end{eqnarray*}%
which completes the proof.
\end{proof}

\begin{lemma}
\label{Lemma F_p}Let the function $F_{p}$ be defined on $(0,\pi /2)$ by%
\begin{equation}
F_{p}\left( x\right) =\tfrac{\sin x}{x}-(\tfrac{1}{3p^{2}}\cos px+1-\tfrac{1%
}{3p^{2}})\text{ if }p\in (0,1]\text{ and }F_{0}\left( x\right) =\tfrac{\sin
x}{x}-1+\tfrac{x^{2}}{6}.  \label{F_p}
\end{equation}

(i) If $F_{p}\left( x\right) <0$ for all $x\in (0,\pi /2)$, then $p\in
\lbrack p_{1},1]$, where $p_{1}=\sqrt{15}/5\approx 0.77460$.

(ii) If $F_{p}\left( x\right) >0$ for all $x\in (0,\pi /2)$, then $p\in
\lbrack 0,p_{0}]$, where $p_{0}\approx 0.77086$.
\end{lemma}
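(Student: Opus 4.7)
The plan is to extract necessary conditions on $p$ from the behavior of $F_{p}$ at the two endpoints of $(0,\pi/2)$: the condition $F_{p}<0$ is forced by the behavior near $x=0^{+}$, while the condition $F_{p}>0$ is forced by the behavior near $x=(\pi/2)^{-}$.

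For part (i), I would expand both $(\sin x)/x$ and $\cos(px)$ as Maclaurin series and compare their first nonzero coefficients. Using
\begin{equation*}
\tfrac{\sin x}{x}=1-\tfrac{x^{2}}{6}+\tfrac{x^{4}}{120}-\cdots,\qquad \cos px=1-\tfrac{p^{2}x^{2}}{2}+\tfrac{p^{4}x^{4}}{24}-\cdots,
\end{equation*}
the constant and $x^{2}$ terms cancel in $F_{p}$, leaving
\begin{equation*}
F_{p}(x)=\tfrac{3-5p^{2}}{360}\,x^{4}+O(x^{6})\quad \text{as }x\to 0^{+}.
\end{equation*}
Thus if $p<p_{1}=\sqrt{15}/5$, i.e.\ $3-5p^{2}>0$, then $F_{p}(x)>0$ for all sufficiently small $x>0$, contradicting the hypothesis $F_{p}<0$ on $(0,\pi/2)$. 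Combined with the restriction $p\in(0,1]$ from the definition, this yields $p\in[p_{1},1]$.

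For part (ii), I would use the other endpoint: if $F_{p}(x)>0$ for all $x\in(0,\pi/2)$, then by continuity at $\pi/2$,
\begin{equation*}
\varphi(p):=F_{p}(\tfrac{\pi}{2})=\tfrac{2}{\pi}-\Bigl(\tfrac{1}{3p^{2}}\cos\tfrac{p\pi}{2}+1-\tfrac{1}{3p^{2}}\Bigr)\ge 0.
\end{equation*}
By Lemma~\ref{Lemma U_p}, $p\mapsto U_{p}(\pi/2)$ is strictly increasing on $[0,1]$, so $\varphi(p)=\frac{2}{\pi}-U_{p}(\pi/2)$ is strictly decreasing there. A direct numerical check shows $\varphi(0)=\frac{2}{\pi}-1+\frac{\pi^{2}}{24}>0$ and $\varphi(1)=\frac{2}{\pi}-\frac{2}{3}<0$, so $\varphi$ has a unique root $p_{0}\in(0,1)$ (this is the definition of $p_{0}$ given in \eqref{F_p=0}), and $\varphi(p)\ge 0$ if and only if $p\le p_{0}$. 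Hence the hypothesis forces $p\in[0,p_{0}]$.

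Neither part is genuinely difficult: the ingredients are Taylor expansion at $0$ and the monotonicity already established in Lemma~\ref{Lemma U_p}. The mildly delicate point is verifying strict monotonicity of $\varphi$ over the full interval $[0,1]$ including the endpoint $p=0$, where the formula for $U_{p}$ takes its limiting form $1-x^{2}/6$; one checks by the same derivative computation as in Lemma~\ref{Lemma U_p} (or by a limiting argument) that the monotonicity extends continuously to $p=0$, so that the evaluation $\varphi(0)=\frac{2}{\pi}-1+\frac{\pi^{2}}{24}$ lies on the correct side.
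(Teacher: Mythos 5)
Your proof is correct and follows essentially the same route as the paper: part (i) via the fourth-order Taylor coefficient $(3-5p^{2})/360$ at $x=0^{+}$, and part (ii) via the sign of $F_{p}$ at $x=(\pi/2)^{-}$ combined with the monotonicity of $p\mapsto U_{p}(\pi/2)$ from Lemma~\ref{Lemma U_p} and the definition of $p_{0}$ as the unique root of \eqref{F_p=0}. The only cosmetic difference is that you anchor the sign change of $\varphi$ at $p=0$ and $p=1$ (and note the limiting case $U_{0}$), while the paper evaluates at $p=1/2$ and $p=1$; this changes nothing essential.
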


\begin{proof}
(i) If $F_{p}\left( x\right) <0$ for all $x\in (0,\pi /2)$, then we have%
\begin{equation}
\lim_{x\rightarrow 0}\frac{\frac{\sin x}{x}-\left( \frac{1}{3p^{2}}\cos px+1-%
\frac{1}{3p^{2}}\right) }{x^{4}}=-\frac{1}{360}\left( 5p^{2}-3\right) \leq 0,
\label{LimitF_p1}
\end{equation}%
which leads to $p\in (\sqrt{15}/5,1]$.

(ii) If $F_{p}\left( x\right) >0$ for all $x\in (0,\pi /2)$, then we have%
\begin{equation*}
F_{p}\left( \frac{\pi }{2}^{-}\right) =\frac{2}{\pi }-\left( \frac{1}{3p^{2}}%
\cos \frac{p\pi }{2}+1-\frac{1}{3p^{2}}\right) >0.
\end{equation*}%
From Lemma \ref{Lemma U_p} we see that the function $p\mapsto F_{p}\left(
\pi /2^{-}\right) $ is decreasing on $[0,1]$, which together with the facts%
\begin{equation*}
F_{1/2}\left( \frac{\pi }{2}^{-}\right) =\frac{2}{\pi }-\frac{2}{3}\sqrt{2}+%
\frac{1}{3}>0\text{ and }F_{1}\left( \frac{\pi }{2}^{-}\right) =\frac{2}{\pi 
}-\frac{2}{3}<0
\end{equation*}%
gives that there is a unique number $p_{0}\in (1/2,1)$ such that $%
F_{p}\left( \pi /2^{-}\right) >0$ for $p\in (0,p_{0})$ and $F_{p}\left( \pi
/2^{-}\right) <0$ for $p\in (p_{0},1)$. Solving the equation $F_{p}\left(
\pi /2^{-}\right) =0$ for $p$ by mathematical computer software we find that 
$p_{0}\approx 0.77086$.

This completes the proof.
\end{proof}

\begin{lemma}
\label{Lemma a_n}Let $c\in (0,3/5]$ and let the sequence $(a_{n}(c))$\ be
defined by 
\begin{equation}
a_{n}(c)=3-\left( 2n+1\right) c^{n-1}.  \label{a_n}
\end{equation}%
Then (i) $a_{n}(c)\geq 0$ for $n\in \mathbb{N}$; (ii) for $n\geq 3$, we have%
\begin{equation*}
1<\frac{a_{n+1}\left( c\right) }{a_{n}\left( c\right) }\leq \frac{%
a_{n+1}\left( 3/5\right) }{a_{n}\left( 3/5\right) }\leq \frac{11}{5}.
\end{equation*}
\end{lemma}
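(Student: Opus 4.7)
The plan is to reduce both parts of the lemma to monotonicity in $n$ or $c$ of elementary scalar sequences, invoking the constraint $c\leq 3/5$ only at key steps.

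For part (i), I would note that $(2n+1)c^{n-1}\leq (2n+1)(3/5)^{n-1}=:\phi(n)$, and a simple ratio check gives $\phi(n+1)/\phi(n)=3(2n+3)/[5(2n+1)]\leq 1$ iff $n\geq 2$, while $\phi(1)=\phi(2)=3$. Hence $\phi(n)\leq 3$ for all $n\geq 1$, which is exactly $a_n(c)\geq 0$; equality occurs at $n=1$ (always) and at $n=2$ when $c=3/5$.

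For part (ii), positivity of $a_n(c)$ for $n\geq 3$ is immediate since $a_n(\cdot)$ has derivative $-(n-1)(2n+1)c^{n-2}\leq 0$ on $(0,3/5]$, and $a_3(3/5)=12/25>0$. The strict lower bound $a_{n+1}(c)/a_n(c)>1$ then unravels to $c<(2n+1)/(2n+3)$, which holds because $3/5<7/9\leq (2n+1)/(2n+3)$ for $n\geq 3$. For the middle inequality I would treat $g(c):=a_{n+1}(c)/a_n(c)$ as a function of $c$ and show it is increasing on $(0,3/5]$. Logarithmic differentiation, after clearing the (positive) denominators, reduces this to the positivity of
\begin{equation*}
P(c):=3(2n+1)(n-1)-3n(2n+3)c+(2n+1)(2n+3)c^n.
\end{equation*}
Now $P'(c)=n(2n+3)\bigl[(2n+1)c^{n-1}-3\bigr]\leq 0$ on $(0,3/5]$ by part (i), so $P$ is decreasing there and it suffices to verify $P(3/5)\geq 0$. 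The affine part $15(2n+1)(n-1)-9n(2n+3)=3(4n^2-14n-5)$ is nonnegative for $n\geq 4$, and the remaining term $5(2n+1)(2n+3)(3/5)^n$ is positive; for $n=3$ a direct computation gives $P(3/5)=876/125>0$.

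For the rightmost inequality I would use the clean algebraic identity
\begin{equation*}
11\,a_n(3/5)-5\,a_{n+1}(3/5)=18-2(8n+1)(3/5)^{n-1},
\end{equation*}
obtained by expanding and using $5(2n+3)(3/5)^n=3(2n+3)(3/5)^{n-1}$. At $n=3$ this vanishes (recovering the equality case $a_4(3/5)/a_3(3/5)=11/5$), and the sequence $(8n+1)(3/5)^{n-1}$ is decreasing in $n$ for $n\geq 2$ (ratio $3(8n+9)/[5(8n+1)]<1$), so the right-hand side is $\geq 0$ for all $n\geq 3$. The main obstacle is the middle monotonicity step: the polynomial $P(c)$ admits no transparent factoring, and invoking part (i) to reduce to the endpoint $c=3/5$ together with a separate numerical check at the edge case $n=3$ seems unavoidable.
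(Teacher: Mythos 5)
Your proof is correct, and its core coincides with the paper's: your polynomial $P(c)=3(2n+1)(n-1)-3n(2n+3)c+(2n+1)(2n+3)c^{n}$ is exactly the paper's $h_{n}(c)=(4n^{2}+8n+3)c^{n}-3n(2n+3)c+(6n^{2}-3n-3)$, and your treatment of the middle inequality (the identity $P'(c)=-n(2n+3)a_{n}(c)\le 0$, hence it suffices to check $P(3/5)\ge 0$, done via $4n^{2}-14n-5\ge 3$ for $n\ge 4$ and $P(3/5)=876/125$ at $n=3$) is precisely the argument in the paper. Where you genuinely diverge is at the two outer inequalities. For $a_{n+1}(3/5)/a_{n}(3/5)\le 11/5$ the paper proves the stronger fact that $n\mapsto a_{n+1}(c)/a_{n}(c)$ is decreasing for every $c\in(0,3/5]$, which needs the comparatively delicate estimate $4c^{n+1}+6(c-1)^{2}n+15c^{2}-18c+3\ge 4c^{n+1}+33\left(\tfrac{7}{11}-c\right)(1-c)>0$; your identity $11a_{n}(3/5)-5a_{n+1}(3/5)=18-2(8n+1)(3/5)^{n-1}$ combined with the monotonicity of $(8n+1)(3/5)^{n-1}$ gets the needed bound at the single point $c=3/5$ much more cheaply and exhibits $n=3$ as the equality case, at the price of not proving monotonicity in $n$ --- which the paper never uses elsewhere anyway. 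Similarly, your lower bound via $a_{n+1}(c)>a_{n}(c)\Longleftrightarrow c<(2n+1)/(2n+3)$ is more direct than the paper's passage through the limiting value $1$ as $c\to 0^{+}$. Two small touch-ups: the ratio $3(2n+3)/[5(2n+1)]$ is $\le 1$ for all $n\ge 1$ (with equality at $n=1$), so the ``iff $n\ge 2$'' should refer to the strict inequality; and your positivity claim for $a_{n}(c)$, $n\ge 3$, as written only covers $n=3$ --- add the one-line remark that $a_{n}(3/5)=3-\phi(n)\ge a_{3}(3/5)=12/25$ because $\phi$ is decreasing for $n\ge 2$, which you already have from part (i).
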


\begin{proof}
(i) We first show that $a_{n}(c)\geq 0$ for $n\in \mathbb{N}$. A simple
computation leads to%
\begin{eqnarray*}
a_{n+1}(c)-a_{n}(c) &=&\left( 2n+1\right) c^{n-1}-\left( 2n+3\right)
c^{n}=c^{n-1}\left( \left( 2n+1\right) -\left( 2n+3\right) c\right) \\
&\geq &c^{n-1}\left( \left( 2n+1\right) -\left( 2n+3\right) \frac{3}{5}%
\right) =\frac{4}{5}c^{n-1}\left( n-1\right) \geq 0,
\end{eqnarray*}%
which implies that $a_{n+1}(c)\geq a_{n}(c)\geq a_{1}(c)=0$.

(ii) Since $a_{1}(c)=0$, $a_{2}(c)=3-5c\geq 0$, $a_{n}(c)>0$ for $n\geq 3$,
if we can show that the function%
\begin{equation*}
\left( c,n\right) \mapsto \frac{a_{n+1}\left( c\right) }{a_{n}\left(
c\right) }=\frac{3-\left( 2n+3\right) c^{n}}{3-\left( 2n+1\right) c^{n-1}}
\end{equation*}%
is increasing in $c$ on $(0,3/5]$ and decreasing in $n\geq 3$, then we have%
\begin{equation*}
1=\frac{a_{n+1}\left( 0\right) }{a_{n}\left( 0\right) }<\frac{a_{n+1}\left(
c\right) }{a_{n}\left( c\right) }<\frac{a_{n+1}\left( 3/5\right) }{%
a_{n}\left( 3/5\right) }\leq \frac{a_{4}\left( 3/5\right) }{a_{3}\left(
3/5\right) }=\frac{11}{5},
\end{equation*}%
which is the desired results. Now we prove that $\left( c,n\right) \mapsto
a_{n+1}\left( c\right) /a_{n}\left( c\right) $ is increasing in $c$ on $%
(0,3/5]$ for $n\geq 3$. Differentiation yields%
\begin{equation*}
c^{2-n}a_{n}^{2}\left( c\right) \left( \frac{a_{n+1}\left( c\right) }{%
a_{n}\left( c\right) }\right) ^{\prime }=\left( 4n^{2}+8n+3\right)
c^{n}-3n\left( 2n+3\right) c+\left( 6n^{2}-3n-3\right) :=h_{n}\left(
c\right) ,
\end{equation*}%
\begin{equation*}
h_{n}^{\prime }(c)=-n\left( 2n+3\right) \left( 3-\left( 2n+1\right)
c^{n-1}\right) =-n\left( 2n+3\right) a_{n}\left( c\right) <0,
\end{equation*}%
where the last inequality holds due to $a_{n}(c)>0$ for $n\geq 3$.
Therefore, we have 
\begin{equation*}
h_{n}\left( c\right) \geq h_{n}\left( 3/5\right) =\left( \frac{3}{5}\right)
^{n}\left( 4n^{2}+8n+3\right) +\frac{3}{5}\left( 4n^{2}-14n-5\right) ,
\end{equation*}%
which is clearly positive due to that $h_{3}\left( 3/5\right) =876/125>0$
and $\left( 4n^{2}-14n-5\right) =n(4n-14)-5\geq 3$ for $n\geq 4$. This
reveals that $h_{n}^{\prime }(c)>0$, that is, $\left( c,n\right) \mapsto
a_{n+1}\left( c\right) /a_{n}\left( c\right) $ is increasing in $c$ on $%
(0,3/5]$.

On the other hand, we have%
\begin{eqnarray*}
\frac{a_{n+1}\left( c\right) }{a_{n}\left( c\right) }-\frac{a_{n+2}\left(
c\right) }{a_{n+1}\left( c\right) } &=&c^{n-1}\frac{4c^{n+1}+6\left(
c-1\right) ^{2}n+15c^{2}-18c+3}{a_{n}\left( c\right) a_{n+1}\left( c\right) }
\\
&\geq &\frac{c^{n-1}}{a_{n}\left( c\right) a_{n+1}\left( c\right) }\left(
4c^{n+1}+6\left( c-1\right) ^{2}\times 3+15c^{2}-18c+3\right) \\
&=&\frac{c^{n-1}}{a_{n}\left( c\right) a_{n+1}\left( c\right) }\left(
4c^{n+1}+33\left( \tfrac{7}{11}-c\right) \left( 1-c\right) \right) >0,
\end{eqnarray*}%
where the first inequality holds due to $a_{n}\left( c\right) >0$ for $n\geq
3$, while the last one holds since $c\in (0,3/5]$. This means that $\left(
c,n\right) \mapsto a_{n+1}\left( c\right) /a_{n}\left( c\right) $ is
decreasing with $n\geq 3$.

Thus we complete the proof of this assertion.
\end{proof}

Now we are in a position to prove Theorem \ref{MTt}.

\begin{proof}[Proof of Theorem \protect\ref{MTt}]
Expanding in power series gives%
\begin{eqnarray*}
F_{p}\left( x\right) &=&\tfrac{\sin x}{x}-(\tfrac{1}{3p^{2}}\cos px+1-\tfrac{%
1}{3p^{2}}) \\
&=&\sum_{n=0}^{\infty }\left( -1\right) ^{n}\frac{x^{2n}}{\left( 2n+1\right)
!}-\left( \frac{1}{3p^{2}}\sum_{n=0}^{\infty }\left( -1\right) ^{n}\frac{%
\left( px\right) ^{2n}}{\left( 2n\right) !}+1-\frac{1}{3p^{2}}\right) \\
&=&\sum_{n=2}^{\infty }\left( -1\right) ^{n}\frac{3-\left( 2n+1\right)
p^{2n-2}}{3\left( 2n+1\right) !}x^{2n} \\
&=&\frac{3-5p^{2}}{360}x^{4}+\sum_{n=3}^{\infty }\left( -1\right) ^{n}\frac{%
a_{n}\left( p^{2}\right) }{3\left( 2n+1\right) !}x^{2n},
\end{eqnarray*}%
where $a_{n}\left( c\right) $ is defined by (\ref{a_n}). Considering the
function $f_{p}\left( x\right) =x^{-4}F_{p}\left( x\right) $, we have%
\begin{equation}
f_{p}\left( x\right) =x^{-4}F_{p}\left( x\right) =\frac{3-5p^{2}}{360}%
+\sum_{n=3}^{\infty }\left( -1\right) ^{n}\frac{a_{n}\left( p^{2}\right) }{%
3\left( 2n+1\right) !}x^{2n-4},  \label{f_p}
\end{equation}%
and differentiation yields%
\begin{eqnarray*}
f_{p}^{\prime }\left( x\right) &=&\sum_{n=3}^{\infty }\left( -1\right) ^{n}%
\frac{\left( 2n-4\right) a_{n}\left( p^{2}\right) }{3\left( 2n+1\right) !}%
x^{2n-5} \\
&:&=\sum_{n=3}^{\infty }\left( -1\right) ^{n}u_{n}(x),
\end{eqnarray*}%
where%
\begin{equation*}
u_{n}(x)=\frac{\left( 2n-4\right) a_{n}\left( p^{2}\right) }{3\left(
2n+1\right) !}x^{2n-5}.
\end{equation*}%
Utilizing Lemma \ref{Lemma a_n} we get that for $p^{2}\in (0,3/5]$ and $%
n\geq 3$,%
\begin{eqnarray*}
\frac{u_{n+1}(x)}{u_{n}(x)} &=&\frac{\frac{\left( 2n-2\right) a_{n+1}\left(
p^{2}\right) }{3\left( 2n+3\right) !}x^{2n-3}}{\frac{\left( 2n-4\right)
a_{n}\left( p^{2}\right) }{3\left( 2n+1\right) !}x^{2n-5}}=\frac{1}{\left(
2n-4\right) \left( 2n+3\right) }\frac{\left( 2n-2\right) }{\left(
2n+2\right) }\frac{a_{n+1}\left( p^{2}\right) }{a_{n}\left( p^{2}\right) }%
x^{2} \\
&<&\frac{1}{\left( 2\times 3-4\right) \left( 2\times 3+3\right) }\times
1\times \frac{11}{5}\times \frac{\pi ^{2}}{4}=\frac{11\pi ^{2}}{360}<1,
\end{eqnarray*}%
which implies that the power series $\sum_{n=3}^{\infty }\left( -1\right)
^{n}u_{n}(x)$ is a Leibniz type alternating one, and so $f_{p}^{\prime
}\left( x\right) <0$ for $p^{2}\in (0,3/5]$.

(i) We first prove the second inequality in (\ref{M1}) holds, where $p_{1}=%
\sqrt{15}/5$ is the best. As shown previously, we see that $f_{p_{1}}$ is
decreasing on $(0,\pi /2)$, and therefore,%
\begin{equation*}
f_{p_{1}}\left( x\right) <f_{p_{1}}\left( 0^{+}\right) =\lim_{x\rightarrow
0^{+}}\left( x^{-4}F_{p}\left( x\right) \right) =\frac{1}{360}\left(
3-5p_{1}^{2}\right) =0,
\end{equation*}%
which together with (\ref{f_p}) yields $F_{p_{1}}\left( x\right) <0$ for $%
x\in \left( 0,\pi /2\right) $.

Next we prove $p_{1}=\sqrt{15}/5$ is the best. If there is another $%
p_{1}^{\ast }<p_{1}$ such that\ the second inequality in (\ref{M1}) holds
for $x\in (0,\pi /2)$, then by Lemma \ref{Lemma F_p} there must be $%
p_{1}^{\ast }\in \lbrack p_{1},1]$, which yields a contradiction. Therefore, 
$p_{1}=\sqrt{15}/5$ can not be replaced with other smaller ones.

(ii) Now we prove the first inequality in (\ref{M1}) holds with the best
constant $p_{0}\approx 0.77088$. Since $p_{0}^{2}\in (0,3/5]$, $f_{p_{0}}$
is also decreasing on $(0,\pi /2)$, and so%
\begin{equation*}
f_{p_{0}}\left( x\right) >f_{p_{0}}\left( \frac{\pi }{2}^{-}\right)
=\lim_{x\rightarrow \pi /2^{-}}\left( x^{-4}F_{p_{0}}\left( x\right) \right)
=\left( \frac{\pi }{2}\right) ^{-4}F_{p_{0}}\left( \frac{\pi }{2}^{-}\right)
=0,
\end{equation*}%
where the last equality is true due to $p_{0}$ is the unique root of the
equation (\ref{F_p=0}) on $\left( 0,1\right) $. It together with (\ref{f_p})
gives $F_{p_{0}}\left( x\right) >0$ for $x\in \left( 0,\pi /2\right) $.

Lastly, we show that $p_{0}$ is the best. Assume that there is another $%
p_{0}^{\ast }>p_{0}$ such that $F_{p_{0}^{\ast }}\left( x\right) >0$ for $%
x\in (0,\pi /2)$. Then by Lemma \ref{Lemma F_p} there must be $p_{0}^{\ast
}\in \lbrack 0,p_{0}]$, which is clear a contradiction. Consequently, $p_{0}$
can not be replaced by other larger numbers.

Thus the proof is complete.
\end{proof}

\begin{remark}
Application of the conclusion that $f_{p}^{\prime }\left( x\right) <0$ for $%
x\in \left( 0,\pi /2\right) $ if $p^{2}\in (0,3/5]$ gives $f_{p}\left( \pi
/2^{-}\right) <f_{p}\left( x\right) <f_{p}\left( 0^{+}\right) $, that is,%
\begin{equation*}
\left( \frac{\pi }{2}\right) ^{-4}F_{p}\left( \frac{\pi }{2}^{-}\right)
<x^{-4}F_{p}\left( x\right) <\lim_{x\rightarrow 0^{+}}x^{-4}F_{p}\left(
x\right) =\frac{3-5p^{2}}{360},
\end{equation*}%
which can be changed into%
\begin{equation}
(\tfrac{1}{3p^{2}}\cos px+1-\tfrac{1}{3p^{2}})+c_{0}\left( p\right) x^{4}<%
\frac{\sin x}{x}<(\tfrac{1}{3p^{2}}\cos px+1-\tfrac{1}{3p^{2}})+c_{1}\left(
p\right) x^{4},  \label{sin E}
\end{equation}%
where $c_{0}\left( p\right) =\left( \pi /2\right) ^{-4}F_{p}\left( \pi
/2^{-}\right) $ and $c_{1}\left( p\right) =\left( 3-5p^{2}\right) /360$ are
the best constants. Then

(i) when $p=p_{1}=\sqrt{15}/5$, we have%
\begin{equation*}
c_{0}\left( p_{1}\right) x^{4}+(\tfrac{1}{3p_{1}^{2}}\cos p_{1}x+1-\tfrac{1}{%
3p_{1}^{2}})<\frac{\sin x}{x}<(\tfrac{1}{3p_{1}^{2}}\cos p_{1}x+1-\tfrac{1}{%
3p_{1}^{2}}),
\end{equation*}%
where $c_{0}\left( p_{1}\right) =\left( \pi /2^{-}\right)
^{-4}F_{p_{1}}\left( \pi /2^{-}\right) \approx -7.2618\times 10^{-5}$ and $%
c_{1}\left( p_{1}\right) =0$ are the best possible constants;

(ii) when $p=p_{0}\approx 0.77086$, we get%
\begin{equation*}
(\tfrac{1}{3p_{0}^{2}}\cos p_{0}x+1-\tfrac{1}{3p_{0}^{2}})<\tfrac{\sin x}{x}%
<(\tfrac{1}{3p_{0}^{2}}\cos p_{0}x+1-\tfrac{1}{3p_{0}^{2}})+c_{1}\left(
p_{0}\right) x^{4},
\end{equation*}%
where $c_{0}\left( p_{0}\right) =0$ and $c_{1}\left( p_{0}\right) =\left(
3-5p_{0}^{2}\right) /360\approx 8.0206\times 10^{-5}$ are the best constants.
\end{remark}

\section{Proof of Theorem 2}

For proving Theorem \ref{MTh}, we first give the following lemmas.

\begin{lemma}
\label{Lemma V_p}For $x\in (0,\infty )$, the function $p\mapsto V_{p}\left(
x\right) $ defined on $[0,\infty )$ by%
\begin{equation*}
V_{p}\left( x\right) =\frac{1}{3p^{2}}\cosh px+1-\frac{1}{3p^{2}}\text{ if }%
p\neq 0\text{ and }V_{0}\left( x\right) =1+\frac{x^{2}}{6}
\end{equation*}%
is increasing.
\end{lemma}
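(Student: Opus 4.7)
The plan is to mirror the proof of Lemma~\ref{Lemma U_p}. For $p>0$ I would differentiate $V_p(x)$ with respect to $p$, show the partial derivative is strictly positive on $(0,\infty)\times(0,\infty)$, and then handle the value $p=0$ separately by a short Taylor expansion argument ensuring continuity from the right.

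For the main computation, a direct differentiation (using $\tfrac{d}{dp}\cosh px=+x\sinh px$, where the sign is opposite to its trigonometric counterpart) gives
$$\frac{\partial V_p}{\partial p}=\frac{1}{3p^{3}}\bigl(2-2\cosh px+px\sinh px\bigr).$$
Applying the identities $\cosh px-1=2\sinh^{2}(px/2)$ and $\sinh px=2\sinh(px/2)\cosh(px/2)$, I would factor the bracket as
$$4\sinh\tfrac{px}{2}\Bigl[\,\tfrac{px}{2}\cosh\tfrac{px}{2}-\sinh\tfrac{px}{2}\,\Bigr],$$
reducing positivity to the elementary fact that $g(t)=t\cosh t-\sinh t$ satisfies $g(0)=0$ and $g'(t)=t\sinh t>0$ for $t>0$, so $g(t)>0$ on $(0,\infty)$. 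This gives $\partial V_p/\partial p>0$ for all $p,x>0$, hence $p\mapsto V_p(x)$ is strictly increasing on $(0,\infty)$.

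To extend to $p=0$, I would expand
$$V_p(x)=1+\frac{\cosh px-1}{3p^{2}}=1+\frac{x^{2}}{6}+\frac{p^{2}x^{4}}{72}+O(p^{4}),$$
which in particular gives $\lim_{p\to 0^{+}}V_p(x)=1+x^{2}/6=V_0(x)$ and $V_p(x)>V_0(x)$ for small $p>0$; combined with strict monotonicity on $(0,\infty)$ this yields monotonicity on all of $[0,\infty)$. No serious obstacle is anticipated: the only point requiring care is the sign flip in the derivative relative to the trigonometric analog, but it still leads to a tractable positivity reduction, here to $t\cosh t>\sinh t$ for all $t>0$ rather than to $\sin t/t>\cos t$ on $(0,\pi/2)$ as in Lemma~\ref{Lemma U_p}.
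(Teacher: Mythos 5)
Your proposal is correct and follows essentially the same route as the paper: differentiate in $p$, obtain $\frac{\partial V_p}{\partial p}=\frac{1}{3p^{3}}\left(px\sinh px-2\cosh px+2\right)$, and factor via half-angle identities so that positivity reduces to $t\cosh t>\sinh t$ for $t>0$ (your factored form is the paper's expression $\frac{2x}{3p^{2}}\left(\cosh\frac{px}{2}-\frac{\sinh(px/2)}{px/2}\right)\sinh\frac{px}{2}$ rewritten). Your explicit verification of the elementary inequality and the limiting argument at $p=0$ are routine additions the paper leaves implicit.
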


\begin{proof}
Differentiation yields%
\begin{eqnarray*}
\frac{\partial V_{p}}{\partial p} &=&\frac{1}{3p^{3}}\left( px\sinh
px-2\cosh px+2\right) \\
&=&\frac{2x}{3p^{2}}\left( \cosh \frac{px}{2}-\frac{\sinh \frac{px}{2}}{%
\frac{px}{2}}\right) \sinh \frac{px}{2}>0,
\end{eqnarray*}%
which completes the proof.
\end{proof}

\begin{lemma}
\label{Lemma G_p}Let the function $G_{p}$ be defined on $(0,\infty )$ by%
\begin{equation*}
G_{p}\left( x\right) =\tfrac{\sinh x}{x}-(\tfrac{1}{3p^{2}}\cosh px+1-\tfrac{%
1}{3p^{2}})\text{ if }p\neq 0\text{ and }G_{0}\left( x\right) =\tfrac{\sinh x%
}{x}-1-\tfrac{x^{2}}{6}.
\end{equation*}

(i) If $G_{p}\left( x\right) <0$ for all $x\in (0,\infty )$, then $p\geq 1$.

(ii) If $G_{p}\left( x\right) >0$ for all $x\in (0,\pi /2)$, then $p\leq
p_{1}=\sqrt{15}/5\approx 0.77460$.
\end{lemma}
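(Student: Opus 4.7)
The plan is to mirror the structure of the proof of Lemma \ref{Lemma F_p}, but with the crucial structural change that the two one-sided bounds on $p$ now come from \emph{opposite} asymptotic regimes: the upper bound in (ii) from the Maclaurin expansion at $x=0^{+}$, and the lower bound in (i) from the exponential growth as $x\to\infty$. This is in contrast with the trigonometric case, where the bounded domain $(0,\pi/2)$ made both endpoints finite and the binding constraint for the upper inequality on $p$ came from evaluating at $\pi/2^{-}$.

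For part (ii), I would expand $G_{p}$ as a power series, exactly as was done for $F_{p}$ in the proof of Theorem \ref{MTt}. Combining $\sinh(x)/x=\sum_{n\geq 0}x^{2n}/(2n+1)!$ with $\cosh(px)=\sum_{n\geq 0}(px)^{2n}/(2n)!$, the $n=0$ and $n=1$ contributions cancel against $1-1/(3p^{2})$ and the $x^{2}/6$ term respectively, and everything reorganizes as
\[
G_{p}(x)=\frac{3-5p^{2}}{360}x^{4}+\sum_{n=3}^{\infty }\frac{a_{n}(p^{2})}{3(2n+1)!}x^{2n},
\]
with $a_{n}(c)=3-(2n+1)c^{n-1}$ as defined in Lemma \ref{Lemma a_n}. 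Dividing by $x^{4}$ and letting $x\to 0^{+}$ yields
\[
\lim_{x\to 0^{+}} x^{-4}G_{p}(x)=\frac{3-5p^{2}}{360}.
\]
If $G_{p}(x)>0$ throughout $(0,\pi/2)$, this limit must be nonnegative, which forces $5p^{2}\leq 3$, i.e.\ $p\leq p_{1}=\sqrt{15}/5$.

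For part (i), the obstruction lies at infinity. Since $\sinh(x)/x\sim e^{x}/(2x)$ and $(3p^{2})^{-1}\cosh(px)\sim e^{px}/(6p^{2})$ as $x\to\infty$, whenever $p<1$ one has
\[
\frac{\sinh x}{x}\Big/\frac{\cosh(px)}{3p^{2}}\;\sim\;3p^{2}\,\frac{e^{(1-p)x}}{x}\;\to\;\infty ,
\]
so $\sinh(x)/x$ eventually dominates $(3p^{2})^{-1}\cosh(px)+1-1/(3p^{2})$ by an arbitrarily large factor. In particular $G_{p}(x)\to +\infty$, contradicting the hypothesis $G_{p}<0$ on all of $(0,\infty)$; hence $p\geq 1$. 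A clean way to write this rigorously is to multiply through by $2xe^{-x}$ and check that $2xe^{-x}G_{p}(x)\to 1$ when $p<1$, which directly exhibits positivity of $G_{p}$ for all sufficiently large $x$.

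The main obstacle I foresee is minor and purely bookkeeping: in part (i) one must ensure that the additive constant $1-1/(3p^{2})$, which can have either sign, does not obscure the exponential comparison. Once $e^{x}/x$ is seen to dwarf $e^{px}$ for $p<1$, any such bounded additive term is swamped and the contradiction is immediate.
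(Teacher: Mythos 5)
Your argument is correct and follows essentially the same route as the paper: the bound $p\leq\sqrt{15}/5$ in (ii) comes from $\lim_{x\to 0^{+}}x^{-4}G_{p}(x)=(3-5p^{2})/360\geq 0$, and the bound $p\geq 1$ in (i) comes from the exponential asymptotics at infinity, where the paper normalizes by $e^{-px}$ and you by $2xe^{-x}$ --- a cosmetic difference only.
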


\begin{proof}
In order to prove the desired results, we need the following two relations:%
\begin{eqnarray}
\lim_{x\rightarrow 0}\frac{G_{p}\left( x\right) }{x^{4}} &=&-\frac{1}{360}%
\left( 5p^{2}-3\right) ,  \label{LimitG_p1} \\
\lim_{x\rightarrow \infty }\frac{G_{p}\left( x\right) }{e^{px}} &=&\left\{ 
\begin{array}{ll}
-\tfrac{1}{6p^{2}} & \text{if }p>1, \\ 
-\tfrac{1}{6} & \text{if }p=1, \\ 
\infty & \text{if }0<p<1, \\ 
\infty & \text{if }p=0.%
\end{array}%
\right.  \label{LimitG_p2}
\end{eqnarray}%
The first one follows by expanding in power series:%
\begin{equation*}
G_{p}\left( x\right) =-\frac{1}{360}\left( 5p^{2}-3\right) x^{4}+o\left(
x^{6}\right) .
\end{equation*}%
To obtain the second one, it needs to note that 
\begin{equation*}
e^{-px}G_{p}\left( x\right) =e^{(1-p)x}\tfrac{1-e^{-2x}}{2x}-\tfrac{1}{3p^{2}%
}\frac{1-e^{-2px}}{2}-\left( 1-\tfrac{1}{3p^{2}}\right) e^{-px},
\end{equation*}%
which gives (\ref{LimitG_p2}).

(i) If $G_{p}\left( x\right) <0$ for all $x\in (0,\infty )$, then we have $%
\lim_{x\rightarrow 0}x^{-4}G_{p}\left( x\right) \leq 0$ and $%
\lim_{x\rightarrow \infty }e^{-px}G_{p}\left( x\right) \leq 0$. These
together with (\ref{LimitG_p1}) and (\ref{LimitG_p2}) give $p\geq 1$.

(ii) If $G_{p}\left( x\right) >0$ for all $x\in (0,\infty )$, then we have $%
\lim_{x\rightarrow 0}x^{-4}G_{p}\left( x\right) \geq 0$ and $%
\lim_{x\rightarrow \infty }e^{-px}G_{p}\left( x\right) \geq 0$. These
together with (\ref{LimitG_p1}) and (\ref{LimitG_p2}) indicate $p\leq p_{1}$.
\end{proof}

We now can prove Theorem \ref{MTh}.

\begin{proof}[Proof of Theorem \protect\ref{MTh}]
The necessity follows by Lemma \ref{Lemma G_p}. To prove the sufficiency, we
expanding $G_{p}\left( x\right) $ in power series to get%
\begin{eqnarray*}
G_{p}\left( x\right) &=&\frac{\sinh x}{x}-(\tfrac{1}{3p^{2}}\cosh px+1-%
\tfrac{1}{3p^{2}}) \\
&=&\sum_{n=0}^{\infty }\frac{x^{2n}}{\left( 2n+1\right) !}-\left( \frac{1}{%
3p^{2}}\sum_{n=0}^{\infty }\frac{\left( px\right) ^{2n}}{\left( 2n\right) !}%
+1-\frac{1}{3p^{2}}\right) \\
&=&\sum_{n=2}^{\infty }\frac{3-\left( 2n+1\right) p^{2n-2}}{3\left(
2n+1\right) !}x^{2n}=\sum_{n=2}^{\infty }\frac{a_{n}\left( p^{2}\right) }{%
3\left( 2n+1\right) !}x^{2n}.
\end{eqnarray*}%
It is derived from Lemma \ref{Lemma a_n} that $a_{n}\left( p^{2}\right) \geq
0$ if $0<p\leq \sqrt{15}/5$, and clearly, $a_{n}\left( p^{2}\right) <0$ if $%
p\geq 1$.
\end{proof}

\section{Applications}

As simple applications of main results, we present some precise estimations
for certain special functions and constants in this section.

The sine integral is defined by%
\begin{equation*}
\func{Si}\left( t\right) =\int_{0}^{t}\frac{\sin x}{x}dx.
\end{equation*}%
Some estimates for sine integral can be seen \cite{Qi.12(4)(1996)}, \cite%
{Wu.19(12)(2006)}, \cite{Wu.12(2)(2008)}, \cite{Yang.GJM.2.1.2014}, \cite%
{Yang.JMI.7.4.2013}. Now we give a new result.

\begin{proposition}
For $t\in (0,\pi /2)$ and $p\in (0,\sqrt{15}/5]$, we have 
\begin{equation}
\tfrac{\sin pt}{3p^{3}}+\left( 1-\tfrac{1}{3p^{2}}\right) t+\tfrac{%
c_{0}\left( p\right) }{5}t^{5}<\limfunc{Si}\left( t\right) <\tfrac{\sin pt}{%
3p^{3}}+\left( 1-\tfrac{1}{3p^{2}}\right) t+\tfrac{c_{1}\left( p\right) }{5}%
t^{5},  \label{A1}
\end{equation}%
where $c_{0}\left( p\right) =\left( \pi /2\right) ^{-4}F_{p}\left( \pi
/2^{-}\right) $ and $c_{1}\left( p\right) =\left( 3-5p^{2}\right) /360$,
here $F_{p}\left( \pi /2^{-}\right) $ is defined by (\ref{F_p=0}).
Particularly, putting $p=0^{+},2/3$, we have%
\begin{eqnarray}
t-\frac{1}{18}t^{3}+\frac{2\pi ^{3}-48\pi +96}{15\pi ^{5}}t^{5} &<&\limfunc{%
Si}\left( t\right) <t-\frac{1}{18}t^{3}+\frac{1}{600}t^{5},  \label{A1p=0} \\
\frac{9}{8}\sin \frac{2t}{3}+\frac{1}{4}t+\frac{2\left( 16-5\pi \right) }{%
5\pi ^{5}}t^{5} &<&\limfunc{Si}\left( t\right) <\frac{9}{8}\sin \frac{2t}{3}+%
\frac{1}{4}t+\frac{7}{16200}t^{5},  \label{A1p=2/3}
\end{eqnarray}%
and then,%
\begin{eqnarray*}
1.3705 &\approx &\frac{2}{5}\pi -\frac{1}{360}\pi ^{3}+\frac{1}{5}<\limfunc{%
Si}\left( \tfrac{\pi }{2}\right) <\frac{1}{2}\pi -\frac{1}{144}\pi ^{3}+%
\frac{1}{19\,200}\pi ^{5}\approx 1.3714, \\
1.3706 &\approx &\frac{1}{16}\pi +\frac{9\sqrt{3}}{16}+\frac{1}{5}<\limfunc{%
Si}\left( \tfrac{\pi }{2}\right) <\frac{1}{8}\pi +\frac{7}{518\,400}\pi ^{5}+%
\frac{9\sqrt{3}}{16}\approx 1.3711.
\end{eqnarray*}
\end{proposition}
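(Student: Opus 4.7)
The plan is to derive the stated bounds by integrating the two-sided estimate (\ref{sin E}) from the preceding Remark term by term on $[0,t]$. Since (\ref{sin E}) is valid for all $p \in (0,\sqrt{15}/5]$ and all $x \in (0,\pi/2)$, for any $t \in (0,\pi/2)$ integration over $[0,t]$ is legal, and the middle member yields $\operatorname{Si}(t)$, while the outer members integrate in closed form: $\int_0^t \frac{1}{3p^2}\cos px\,dx = \frac{\sin pt}{3p^3}$, $\int_0^t(1-\frac{1}{3p^2})\,dx = (1-\frac{1}{3p^2})t$, and $\int_0^t c_i(p)x^4\,dx = \frac{c_i(p)}{5}t^5$ for $i=0,1$. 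This gives (\ref{A1}) at once.

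For the specific inequality (\ref{A1p=2/3}) I set $p = 2/3$ (which satisfies $2/3 < \sqrt{15}/5$), compute $1/(3p^2) = 3/4$, $1/(3p^3) = 9/8$, and $c_1(2/3) = (3 - 5(2/3)^2)/360 = 7/3240$, and evaluate $c_0(2/3) = (\pi/2)^{-4} F_{2/3}(\pi/2^-)$ using the definition (\ref{F_p=0}); after simplification this is $2(16-5\pi)/\pi^5$, matching the stated coefficient. For (\ref{A1p=0}) I pass to the limit $p \to 0^+$ in (\ref{A1}): the combination $\frac{\sin pt}{3p^3} + (1-\frac{1}{3p^2})t = t + \frac{\sin pt - pt}{3p^3}$ is regular at $p=0$ and tends to $t - t^3/18$ (the $1/p^2,1/p^3$ singularities cancel by the Taylor expansion of $\sin$), while $c_1(p) \to 1/120$ gives $c_1(0)/5 = 1/600$; a direct expansion $\cos(p\pi/2) = 1 - (p\pi/2)^2/2 + (p\pi/2)^4/24 - \cdots$ in the definition of $c_0(p)$ produces $c_0(0) = (16/\pi^4)(2/\pi - 1 + \pi^2/24)$, and regrouping over the common denominator $15\pi^5$ yields the stated $(2\pi^3 - 48\pi + 96)/(15\pi^5)$.

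The numerical two-sided estimates for $\operatorname{Si}(\pi/2)$ follow by direct substitution $t = \pi/2$ in (\ref{A1p=0}) and (\ref{A1p=2/3}) and routine simplification.

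The only delicate point is the $p \to 0^+$ limit, where one has to verify that the apparent $p^{-2}$ and $p^{-3}$ singularities on both sides really cancel and, in particular, that the limit of $c_0(p)$ agrees with the rational-in-$\pi$ expression given in (\ref{A1p=0}); once this book-keeping is carried out, the rest is simply termwise integration of a known inequality.
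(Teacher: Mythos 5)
Your proposal is correct and follows exactly the paper's own proof: integrate the two-sided estimate (\ref{sin E}) over $[0,t]$ to get (\ref{A1}), then specialize to $p=2/3$ and let $p\to 0^{+}$ (with the cancellation of the $p^{-2}$, $p^{-3}$ singularities that you carry out), and finally substitute $t=\pi/2$. All the coefficient computations you report ($c_{1}(2/3)=7/3240$, $c_{0}(2/3)=2(16-5\pi)/\pi^{5}$, $c_{0}(0^{+})$ regrouping to $(2\pi^{3}-48\pi+96)/(15\pi^{5})$) check out.
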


\begin{proof}
Integrating each sides in (\ref{sin E}) over $\left[ 0,t\right] $ yields (%
\ref{A1}). Taking the limits of the left and right hand sides in\ (\ref{A1})
as $p\rightarrow 0^{+}$ gives (\ref{A1p=0}), and putting $p=2/3$ in (\ref{A1}%
) leads to (\ref{A1p=2/3}). Substituting $t=\pi /2$ into (\ref{A1p=0}) and (%
\ref{A1p=2/3}) we get the last two approximations of $\limfunc{Si}\left( \pi
/2\right) $.
\end{proof}

It is known that%
\begin{equation*}
\int_{0}^{\infty }\frac{x}{\sinh x}dx=\frac{1}{2}\psi ^{\prime }(\tfrac{1}{2}%
)=\frac{\pi ^{2}}{4},
\end{equation*}%
where $\psi ^{\prime }$ is the tri-gamma function defined by%
\begin{equation*}
\psi ^{\prime }(t)=\int_{0}^{\infty }\frac{xe^{-tx}}{1-e^{-x}}dx.
\end{equation*}%
We define 
\begin{equation*}
Sh\left( t\right) =\int_{0}^{t}\frac{x}{\sinh x}dx.
\end{equation*}%
Then by (\ref{M2}) we have%
\begin{equation*}
\frac{3}{\cosh x+2}<\frac{x}{\sinh x}<\frac{9}{5\cosh (\sqrt{15}x/5)+4}.
\end{equation*}%
Integrating over $\left[ 0,t\right] $ and calculating lead to

\begin{proposition}
For $t>0$, we have%
\begin{eqnarray}
&&\sqrt{3}\ln \tfrac{e^{t}-\sqrt{3}+2}{e^{t}+\sqrt{3}+2}-\sqrt{3}\ln \left(
2-\sqrt{3}\right)  \notag \\
&<&Sh\left( t\right) <2\sqrt{15}\arctan \left( \tfrac{5}{3}e^{\sqrt{15}t/5}+%
\tfrac{4}{3}\right) -2\sqrt{15}\arctan 3.  \label{A2}
\end{eqnarray}%
In particular, we have%
\begin{equation*}
4.5621\approx 2\sqrt{3}\ln \left( 2+\sqrt{3}\right) <\psi ^{\prime }(\tfrac{1%
}{2})<2\sqrt{15}\pi -4\sqrt{15}\arctan 3\approx 4.9845.
\end{equation*}
\end{proposition}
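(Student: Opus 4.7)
The plan is to integrate the two-sided bound
\[
\tfrac{5}{9}\cosh(\sqrt{15}x/5)+\tfrac{4}{9}<\tfrac{\sinh x}{x}<\tfrac{1}{3}\cosh x+\tfrac{2}{3}
\]
from Corollary \ref{MThc}, rewritten in reciprocal form as
\[
\tfrac{3}{\cosh x+2}<\tfrac{x}{\sinh x}<\tfrac{9}{5\cosh(\sqrt{15}x/5)+4},
\]
over $[0,t]$, and then pass to the limit $t\to\infty$ to obtain the stated estimate on $\psi'(\tfrac{1}{2})=2\int_{0}^{\infty}\tfrac{x}{\sinh x}\,dx$. The real content is therefore just two elementary antiderivatives, one leading to a log, the other to an arctan.

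For the lower bound on $Sh(t)$, write $\cosh x+2=\tfrac{1}{2}(e^{x}+e^{-x})+2$ and multiply the numerator and denominator of the integrand by $2e^{x}$; the substitution $u=e^{x}$ reduces $\int_{0}^{t}\tfrac{3\,dx}{\cosh x+2}$ to $\int_{1}^{e^{t}}\tfrac{6\,du}{u^{2}+4u+1}$. Completing the square $u^{2}+4u+1=(u+2)^{2}-3$ and applying the standard $\tfrac{1}{2a}\ln\bigl|\tfrac{w-a}{w+a}\bigr|$ antiderivative with $a=\sqrt{3}$, $w=u+2$ gives $\sqrt{3}\ln\tfrac{u+2-\sqrt{3}}{u+2+\sqrt{3}}$. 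Evaluating at the endpoints and noting that $\tfrac{3-\sqrt{3}}{3+\sqrt{3}}=2-\sqrt{3}$ then yields precisely the lower bound in (\ref{A2}).

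For the upper bound, the substitution $v=e^{\sqrt{15}x/5}$ (so $dx=\tfrac{5}{\sqrt{15}}\tfrac{dv}{v}$) converts $\int_{0}^{t}\tfrac{9\,dx}{5\cosh(\sqrt{15}x/5)+4}$ into $\tfrac{90}{\sqrt{15}}\int_{1}^{e^{\sqrt{15}t/5}}\tfrac{dv}{5v^{2}+8v+5}$. Completing the square produces $5v^{2}+8v+5=\tfrac{9}{5}\bigl(1+(\tfrac{5v+4}{3})^{2}\bigr)$, whose antiderivative is $\tfrac{1}{3}\arctan\tfrac{5v+4}{3}$; combining the constants via $\tfrac{90}{\sqrt{15}}\cdot\tfrac{1}{3}=2\sqrt{15}$ and using $\arctan 3$ at the lower limit $v=1$ gives the stated upper bound.

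The numerical enclosure of $\psi'(\tfrac{1}{2})$ then follows by letting $t\to\infty$ in (\ref{A2}). On the left, $\ln\tfrac{e^{t}-\sqrt{3}+2}{e^{t}+\sqrt{3}+2}\to 0$, and since $(2-\sqrt{3})(2+\sqrt{3})=1$ we have $-\ln(2-\sqrt{3})=\ln(2+\sqrt{3})$, so the left-hand limit is $\sqrt{3}\ln(2+\sqrt{3})$; on the right, the arctangent tends to $\pi/2$, giving $\sqrt{15}\pi-2\sqrt{15}\arctan 3$. Doubling (because $\psi'(\tfrac{1}{2})=2\lim_{t\to\infty}Sh(t)$) produces the claimed bounds. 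No real obstacle arises here: the integrations are mechanical once the correct substitutions are chosen, and the only mildly delicate step is recognizing the identity $\tfrac{3-\sqrt{3}}{3+\sqrt{3}}=2-\sqrt{3}$ that brings the closed form into exactly the shape stated in (\ref{A2}).
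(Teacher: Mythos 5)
Your proposal is correct and follows exactly the paper's route: invert the two-sided bound from Theorem \ref{MTh} (equivalently the relevant inequalities in Corollary \ref{MThc}), integrate over $[0,t]$ via the substitutions $u=e^{x}$ and $v=e^{\sqrt{15}x/5}$, and let $t\to\infty$ together with $\psi'(\tfrac12)=2\int_{0}^{\infty}\tfrac{x}{\sinh x}\,dx$; your explicit antiderivative computations and endpoint evaluations (including $\tfrac{3-\sqrt3}{3+\sqrt3}=2-\sqrt3$ and $\tfrac{30}{\sqrt{15}}=2\sqrt{15}$) all check out. The only cosmetic remark is that strictness of the final numerical bounds is cleanest obtained by integrating the strict pointwise inequality over all of $(0,\infty)$ rather than by passing to the limit in (\ref{A2}), but this is immediate.
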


The Catalan constant \cite{Catalan.31(7)}%
\begin{equation*}
G=\sum_{n=0}^{\infty }\frac{\left( -1\right) ^{n}}{\left( 2n+1\right) ^{2}}%
=0.9159655941772190...
\end{equation*}%
is a famous mysterious constant appearing in many places in mathematics and
physics. Its integral representations contain the following \cite%
{Bradley.2001}%
\begin{equation}
G=\int_{0}^{1}\frac{\arctan x}{x}dx=\frac{1}{2}\int_{0}^{\pi /2}\frac{x}{%
\sin x}dx=\frac{\pi ^{2}}{16}-\frac{\pi }{4}\ln 2+\int_{0}^{\pi /4}\frac{%
x^{2}}{\sin ^{2}x}dx.  \label{G}
\end{equation}%
We present an estimation for $G$ below.

\begin{proposition}
We have 
\begin{equation}
0.91586\approx \tfrac{\sqrt{15}}{4}\ln \tfrac{4\cos \frac{\sqrt{15}\pi }{10}%
+3\sin \frac{\sqrt{15}\pi }{10}+5}{4\cos \frac{\sqrt{15}\pi }{10}-3\sin 
\frac{\sqrt{15}\pi }{10}+5}<G<\tfrac{\sqrt{15}}{5}\ln \tfrac{11\sqrt{2-\sqrt{%
2}}+3\sqrt{15}\sqrt{\sqrt{2}+2}+32}{11\sqrt{2-\sqrt{2}}-3\sqrt{15}\sqrt{%
\sqrt{2}+2}+32}\approx 0.91675,  \label{A3}
\end{equation}
\end{proposition}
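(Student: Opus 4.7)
The plan is to start from the integral representation $G=\tfrac{1}{2}\int_{0}^{\pi/2}\tfrac{x}{\sin x}\,dx$ recorded in (\ref{G}) and sandwich the integrand between the two tightest elementary bounds on $(\sin x)/x$ from Corollary 1, namely
\[
\tfrac{16}{27}\cos\tfrac{3x}{4}+\tfrac{11}{27}<\tfrac{\sin x}{x}<\tfrac{5}{9}\cos\tfrac{\sqrt{15}x}{5}+\tfrac{4}{9}.
\]
Taking reciprocals and integrating over $(0,\pi/2)$ reduces the proposition to evaluating
\[
I_{-}=\tfrac{1}{2}\int_{0}^{\pi/2}\tfrac{9\,dx}{5\cos(\sqrt{15}x/5)+4},\qquad I_{+}=\tfrac{1}{2}\int_{0}^{\pi/2}\tfrac{27\,dx}{16\cos(3x/4)+11},
\]
and showing $I_{-}<G<I_{+}$ coincides with the printed sandwich. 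The rest is purely algebraic.

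For each integral I would perform the linear substitution $u=\alpha x$ (with $\alpha=\sqrt{15}/5$ for $I_{-}$ and $\alpha=3/4$ for $I_{+}$) and then the Weierstrass substitution $t=\tan(u/2)$. In both cases $1/(a+b\cos u)$ collapses, after the factor $1+t^{2}$ cancels against $du=2\,dt/(1+t^{2})$, to a rational form $2/(A^{2}-B^{2}t^{2})$ with $(A,B)=(3,1)$ for $I_{-}$ and $(A,B)=(3\sqrt{3},\sqrt{5})$ for $I_{+}$. Partial fractions give antiderivative $\tfrac{1}{AB}\ln\tfrac{A+Bt}{A-Bt}$, yielding the intermediate closed forms $I_{-}=\tfrac{\sqrt{15}}{2}\ln\tfrac{3+\tan(\sqrt{15}\pi/20)}{3-\tan(\sqrt{15}\pi/20)}$ and $I_{+}=\tfrac{2\sqrt{15}}{5}\ln\tfrac{3\sqrt{3}+\sqrt{5}\tan(3\pi/16)}{3\sqrt{3}-\sqrt{5}\tan(3\pi/16)}$.

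To match the precise form printed in the proposition I would then square each fractional argument and halve the outer coefficient, invoking the identity
\[
\left(\tfrac{A+Bt}{A-Bt}\right)^{2}=\tfrac{(A^{2}+B^{2})+(A^{2}-B^{2})\cos\theta+2AB\sin\theta}{(A^{2}+B^{2})+(A^{2}-B^{2})\cos\theta-2AB\sin\theta},\qquad t=\tan\tfrac{\theta}{2},
\]
a direct consequence of $\cos\theta=(1-t^{2})/(1+t^{2})$ and $\sin\theta=2t/(1+t^{2})$. With $(A,B,\theta)=(3,1,\sqrt{15}\pi/10)$ the coefficients become $A^{2}+B^{2}=10$, $A^{2}-B^{2}=8$, $2AB=6$, and dividing top and bottom by $2$ converts $I_{-}$ into the stated $\tfrac{\sqrt{15}}{4}\ln\tfrac{4\cos(\sqrt{15}\pi/10)+3\sin(\sqrt{15}\pi/10)+5}{4\cos(\sqrt{15}\pi/10)-3\sin(\sqrt{15}\pi/10)+5}$. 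With $(A,B,\theta)=(3\sqrt{3},\sqrt{5},3\pi/8)$ the coefficients are $A^{2}+B^{2}=32$, $A^{2}-B^{2}=22$, $2AB=6\sqrt{15}$, and combining with $\cos(3\pi/8)=\tfrac{1}{2}\sqrt{2-\sqrt{2}}$, $\sin(3\pi/8)=\tfrac{1}{2}\sqrt{2+\sqrt{2}}$ produces exactly the stated $\tfrac{\sqrt{15}}{5}\ln\tfrac{11\sqrt{2-\sqrt{2}}+3\sqrt{15}\sqrt{2+\sqrt{2}}+32}{11\sqrt{2-\sqrt{2}}-3\sqrt{15}\sqrt{2+\sqrt{2}}+32}$. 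The two decimal approximations then follow by direct numerical evaluation.

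The only genuine subtlety is this last algebraic reshuffling: the intermediate ``tangent of a half'' expressions look quite different from the published form, and one has to notice that squaring the argument (rather than chasing a different pair of bounds from Corollary 1) unlocks precisely the required $\sqrt{2\pm\sqrt{2}}$ structure. Everything else is a mechanical Weierstrass computation.
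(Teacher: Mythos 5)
Your proposal is correct and follows essentially the same route as the paper: it uses the representation $G=\tfrac12\int_0^{\pi/2}\tfrac{x}{\sin x}\,dx$, sandwiches $x/\sin x$ by reciprocating the fourth and fifth inequalities of (\ref{M1c}), and integrates over $[0,\pi/2]$. The only difference is that the paper states the two closed-form integrals as ``direct computations,'' whereas you supply the Weierstrass-substitution details and the half-angle rewriting needed to reach the printed logarithmic expressions; these check out.
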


\begin{proof}
From the fourth and fifth inequalities in (\ref{M1c}) we obtain that for $%
x\in (0,\pi /2)$, the two-side inequality 
\begin{equation*}
\frac{1}{\tfrac{5}{9}\cos \tfrac{\sqrt{15}x}{5}+\tfrac{4}{9}}<\frac{x}{\sin x%
}<\frac{1}{\tfrac{16}{27}\cos \tfrac{3x}{4}+\tfrac{11}{27}}
\end{equation*}%
holds true. Integrating both sides over $\left[ 0,\pi /2\right] $ yields%
\begin{equation*}
\int_{0}^{\pi /2}\frac{dx}{\tfrac{5}{9}\cos \tfrac{\sqrt{15}x}{5}+\tfrac{4}{9%
}}<\int_{0}^{\pi /2}\frac{x}{\sin x}dx<\int_{0}^{\pi /2}\frac{dx}{\tfrac{16}{%
27}\cos \tfrac{3x}{4}+\tfrac{11}{27}}.
\end{equation*}%
Direct computations give%
\begin{eqnarray*}
\int_{0}^{\pi /2}\frac{dx}{\tfrac{5}{9}\cos \tfrac{\sqrt{15}x}{5}+\tfrac{4}{9%
}} &=&\frac{\sqrt{15}}{2}\ln \tfrac{4\cos \frac{\sqrt{15}\pi }{10}+3\sin 
\frac{\sqrt{15}\pi }{10}+5}{4\cos \frac{\sqrt{15}\pi }{10}-3\sin \frac{\sqrt{%
15}\pi }{10}+5}\approx 1.8317, \\
\int_{0}^{\pi /2}\frac{dx}{\tfrac{16}{27}\cos \tfrac{3x}{4}+\tfrac{11}{27}}
&=&\frac{2\sqrt{15}}{5}\ln \tfrac{11\sqrt{2-\sqrt{2}}+3\sqrt{15}\sqrt{\sqrt{2%
}+2}+32}{11\sqrt{2-\sqrt{2}}-3\sqrt{15}\sqrt{\sqrt{2}+2}+32}\approx 1.8335
\end{eqnarray*}%
Utilizing the the second formula in (\ref{G}) (\ref{A2}) follows.
\end{proof}

We close this paper by giving some inequalities for bivariate means.

The Schwab-Borchardt mean of two numbers $a\geq 0$ and $b>0$, denoted by $%
SB\left( a,b\right) $, is defined as \cite[Theorem 8.4]{Biernacki.9.1955}, 
\cite[3, (2.3)]{Carlson.78(1971)}%
\begin{equation*}
SB\left( a,b\right) =\left\{ 
\begin{array}{cc}
\frac{\sqrt{b^{2}-a^{2}}}{\arccos \left( a/b\right) } & \text{if }a<b, \\ 
a & \text{if }a=b, \\ 
\frac{\sqrt{a^{2}-b^{2}}}{\func{arccosh}\left( a/b\right) } & \text{if }a>b.%
\end{array}%
\right.
\end{equation*}%
The properties and certain inequalities involving Schwab-Borchardt mean can
be found in \cite{Neuman.14(2003)}, \cite{Neuman.JMI.5.4.2011}. We now
establish a new inequality for this mean.

For $a<b$, letting $x=\arccos \left( a/b\right) $ in the fourth inequality
of (\ref{M1c}) and using half-angle and triple-angle formulas for cosine
function, and multiplying two sides by $b$, we get%
\begin{eqnarray*}
SB\left( a,b\right) &\geq &\tfrac{16}{27}b\sqrt{\frac{1+\sqrt{\frac{%
1+4\left( a/b\right) ^{3}-3a/b}{2}}}{2}}+\tfrac{11}{27}b \\
&=&\tfrac{8}{27}\left( \sqrt{2\left( b-2a\right) ^{2}\left( a+b\right) }%
+2b^{3/2}\right) ^{1/2}b^{1/4}+\tfrac{11}{27}b.
\end{eqnarray*}%
For $a>b$, letting $x=\func{arccosh}\left( a/b\right) $ in the inequality
connecting the fourth and sixth members of (\ref{M1c}) and using half-angle
and triple-angle formulas for hyperbolic cosine function, and multiplying
two sides by $b$, we get the same inequality as above.

\begin{proposition}
For $a,b>0$, we have%
\begin{equation}
SB\left( a,b\right) \geq \tfrac{8\sqrt{2}}{27}\left( |b-2a|\sqrt{\frac{a+b}{2%
}}+b^{3/2}\right) ^{1/2}b^{1/4}+\tfrac{11}{27}b.  \label{A4}
\end{equation}
\end{proposition}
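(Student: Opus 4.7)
The plan is to split into the three cases $a=b$, $a<b$, and $a>b$. The case $a=b$ follows by direct substitution: using $|b-2a|=a$, $\sqrt{(a+b)/2}=\sqrt{a}$, and $b^{3/2}=a^{3/2}$, the right-hand side collapses to $\tfrac{8\sqrt{2}}{27}(2a^{3/2})^{1/2}a^{1/4}+\tfrac{11}{27}a=\tfrac{16}{27}a+\tfrac{11}{27}a=a=SB(a,a)$, so equality holds.

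For $a<b$, I would set $x=\arccos(a/b)\in(0,\pi/2)$, so that $\cos x=a/b$ and $SB(a,b)=b\sin x/x$. Then invoke the fourth inequality in (\ref{M1c}), namely $\tfrac{16}{27}\cos(3x/4)+\tfrac{11}{27}<\sin x/x$, and rewrite $\cos(3x/4)$ as an explicit algebraic expression in $a,b$. This is done by applying in sequence: the half-angle identity $\cos(3x/4)=\sqrt{(1+\cos(3x/2))/2}$ (valid since $3x/4\in(0,3\pi/8)$); the triple-angle identity $\cos(3\theta)=4\cos^3\theta-3\cos\theta$ at $\theta=x/2$, giving $\cos(3x/2)=\cos(x/2)(4\cos^2(x/2)-3)$; and the half-angle identity $\cos^2(x/2)=(a+b)/(2b)$. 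These combine into
\[
\cos(3x/2)=\tfrac{2a-b}{b^{3/2}}\sqrt{\tfrac{a+b}{2}},
\]
whence back-substitution, multiplication by $b$, and regrouping of constants produce the stated closed-form lower bound.

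For $a>b$, the parallel step is to set $y=\operatorname{arccosh}(a/b)>0$, so that $\cosh y=a/b$ and $SB(a,b)=b\sinh y/y$. I would then apply the fourth inequality of (\ref{M2c}), $\tfrac{16}{27}\cosh(3y/4)+\tfrac{11}{27}<\sinh y/y$, and run the analogous hyperbolic reduction via $\cosh(3y/4)=\sqrt{(1+\cosh(3y/2))/2}$, $\cosh(3y/2)=\cosh(y/2)(4\cosh^2(y/2)-3)$, and $\cosh^2(y/2)=(a+b)/(2b)$. Since $2a-b>0$ in this regime, the hyperbolic and trigonometric computations produce the same algebraic expression, giving a single unified bound covering both branches.

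The main obstacle I anticipate is the sign bookkeeping in the trigonometric branch: for $a\in(0,b/2)$ the quantity $2a-b$ is negative while $\cos(3x/4)$ remains positive, so the absolute value $|b-2a|$ appearing in the statement must be reconciled carefully with the two sign regimes when back-substituting into the half-angle expression for $\cos(3x/4)$. Once this is resolved, and the factor $\tfrac{16b}{27}$ is absorbed as $\tfrac{8\sqrt{2}}{27}b^{1/4}$ times the square root, the three cases assemble into the single claimed lower bound valid for all $a,b>0$.
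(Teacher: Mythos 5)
You follow the same route as the paper: substitute $x=\arccos(a/b)$ (resp.\ the hyperbolic analogue) into the fourth inequalities of (\ref{M1c}) and (\ref{M2c}) and reduce $\cos\frac{3x}{4}$ via half- and triple-angle formulas. Your hyperbolic branch and the equality check at $a=b$ are fine, but the step you postpone as ``sign bookkeeping'' in the trigonometric branch is exactly where the argument breaks, and it cannot be repaired. Your own correct identity $\cos\frac{3x}{2}=\frac{2a-b}{b^{3/2}}\sqrt{\frac{a+b}{2}}$ shows that what the Cusa-type bound actually yields is
\begin{equation*}
SB(a,b)\ \ge\ \tfrac{8\sqrt{2}}{27}\left((2a-b)\sqrt{\tfrac{a+b}{2}}+b^{3/2}\right)^{1/2}b^{1/4}+\tfrac{11}{27}b,
\end{equation*}
with the signed factor $2a-b$, not $|b-2a|$. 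When $2a<b$ this is strictly weaker than (\ref{A4}), so (\ref{A4}) does not follow from it; in fact (\ref{A4}) is false in that regime. For example, $a=1/5$, $b=1$ gives $SB(a,b)=\sqrt{0.96}/\arccos(1/5)\approx 0.7155$, while the right-hand side of (\ref{A4}) is $\approx 0.9145$; and as $a\to 0^{+}$ one has $SB(a,b)\to 2b/\pi\approx 0.6366\,b$ against a right-hand side tending to $\approx 0.9549\,b$.

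The discrepancy originates in the paper itself: its displayed derivation uses $\cos\frac{3x}{2}=\sqrt{(1+\cos 3x)/2}$, which is legitimate only for $x\le\pi/3$, i.e.\ $a\ge b/2$; for $a<b/2$ it silently replaces $\cos\frac{3x}{2}$ by $\left|\cos\frac{3x}{2}\right|$, which is how $|b-2a|$ (and the false consequence $SB(a,b)\ge\frac{11+8\sqrt{2}}{27}b$ in the subsequent Remark) appears. So your sketch, carried out honestly, does not prove the Proposition as stated; what it does prove is the corrected inequality with $(2a-b)$ in place of $|b-2a|$ for $a<b$ (equivalently, (\ref{A4}) under the additional hypothesis $2a\ge b$), together with (\ref{A4}) itself in the hyperbolic case $a>b$, where $|b-2a|=2a-b$ and your computation goes through unchanged.
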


\begin{remark}
From the inequality (\ref{A4}), it is easy to get%
\begin{equation*}
SB\left( a,b\right) \geq \frac{11+8\sqrt{2}}{27}b\approx 0.82643\times b
\end{equation*}
due to $|b-2a|\geq 0$. It seems to new and interesting.
\end{remark}

For $a,b>0$, with $x=\left( 1/2\right) \ln \left( a/b\right) $, we have%
\begin{equation*}
\frac{\sinh x}{x}=\frac{L(a,b)}{G\left( a,b\right) }\text{, \ \ \ }\cosh px=%
\frac{\left( a^{p}+b^{p}\right) /2}{\left( \sqrt{ab}\right) ^{p}}=\frac{%
A_{p}^{p}\left( a,b\right) }{G^{p}\left( a,b\right) },
\end{equation*}%
and by Theorem \ref{MTh} we immediately get the following

\begin{proposition}
For $a,b>0$ with $a\neq b$, the double inequality%
\begin{equation}
\tfrac{5}{9}A_{\sqrt{15}/5}^{\sqrt{15}/5}G^{1-\sqrt{15}/5}+\tfrac{4}{9}G<L<%
\tfrac{1}{3}A+\tfrac{2}{3}G  \label{A5}
\end{equation}%
holds with the best constants $p_{1}=\sqrt{15}/5$ and $1$. And, the function%
\begin{equation*}
p\mapsto \tfrac{1}{3p^{2}}A_{p}^{p}G^{1-p}+\left( 1-\tfrac{1}{3p^{2}}\right)
G\text{ if }p\neq 0\text{ and }G+\frac{\left( \ln b-\ln a\right) ^{2}}{24}G%
\text{ if }p=0
\end{equation*}%
is increasing on $\mathbb{R}$.
\end{proposition}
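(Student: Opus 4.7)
The plan is to reduce the proposition directly to Theorem~\ref{MTh} via the substitution $x = \tfrac{1}{2}\ln(a/b)$ indicated in the paragraph preceding the statement. Under this substitution one verifies $\sinh x/x = L(a,b)/G(a,b)$ and $\cosh(px) = A_p^p(a,b)/G^p(a,b)$ (the latter from $\cosh(px) = \tfrac{1}{2}\bigl((a/b)^{p/2} + (b/a)^{p/2}\bigr) = (a^p+b^p)/(2(ab)^{p/2})$), so that multiplying the double inequality of Theorem~\ref{MTh} through by $G>0$ yields
\begin{equation*}
\tfrac{1}{3p^{2}}A_{p}^{p}G^{1-p}+\bigl(1-\tfrac{1}{3p^{2}}\bigr)G \;<\; L \;<\; \tfrac{1}{3q^{2}}A_{q}^{q}G^{1-q}+\bigl(1-\tfrac{1}{3q^{2}}\bigr)G.
\end{equation*}
Specializing to the sharp endpoints $p = p_1 = \sqrt{15}/5$ (for which $1/(3p_1^2) = 5/9$ and $1 - 1/(3p_1^2) = 4/9$) and $q = 1$ then produces (\ref{A5}).

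For sharpness of the constants $p_1$ and $1$, I would argue by contraposition. The map $(a,b) \mapsto x = \tfrac{1}{2}\ln(a/b)$ with $a \neq b$ is surjective onto $\mathbb{R}\setminus\{0\}$, and since both sides of the inequality are symmetric in $a,b$ (equivalently, both sides transform as even functions of $x$), it suffices to work with $x > 0$. Any hypothetical improvement of $p_1$ or $1$ in (\ref{A5}) would therefore pull back to a corresponding improvement of the sharp constants in Theorem~\ref{MTh}, contradicting the sharpness established there.

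For the monotonicity claim, observe that the function in question is precisely $G \cdot V_p(x)$, where $V_p$ is the function of Lemma~\ref{Lemma V_p}. Since $G > 0$ is independent of $p$, the claim reduces to $p \mapsto V_p(x)$ being increasing, which is Lemma~\ref{Lemma V_p} itself. The value at $p = 0$ comes from $V_0(x) = 1 + x^2/6$: multiplying by $G$ and substituting $x = \tfrac{1}{2}\ln(a/b)$ gives $G + (\ln b - \ln a)^2 G/24$, which matches the stated expression.

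The main obstacle is essentially negligible here, since all the analytical work has been packaged into Theorem~\ref{MTh} and Lemma~\ref{Lemma V_p}; the proposition is a translation under a standard substitution. The only points requiring care are the algebraic identity $A_p^p/G^p = \cosh(px)$ and the transfer of sharpness through the bijection $(a,b) \leftrightarrow x$, both of which are routine.
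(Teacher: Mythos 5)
Your proposal is correct and is exactly the paper's route: the proposition is obtained ``immediately'' from Theorem~\ref{MTh} via the substitution $x=\tfrac12\ln(a/b)$ and the identities $\sinh x/x=L/G$ and $\cosh px=A_p^p/G^{p}$, with the sharpness pulled back from Theorem~\ref{MTh} and the monotonicity in $p$ coming from Lemma~\ref{Lemma V_p}. The only caveat --- inherited from the paper's statement rather than introduced by you --- is that Lemma~\ref{Lemma V_p} yields monotonicity of $p\mapsto V_p(x)$ only on $[0,\infty)$, and since $\tfrac{1}{3p^{2}}A_{p}^{p}G^{1-p}+\bigl(1-\tfrac{1}{3p^{2}}\bigr)G$ is an even function of $p$ it cannot be increasing on all of $\mathbb{R}$; the claim should read ``increasing on $[0,\infty)$''.
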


\begin{remark}
Accordingly, Corollary \ref{MThc} can be changed into the chain of
inequalities for means:%
\begin{eqnarray*}
A_{1/\sqrt{3}}^{1/\sqrt{3}}G^{1-1/\sqrt{3}} &<&\tfrac{3}{4}%
A_{2/3}^{2/3}G^{1/3}+\tfrac{1}{4}G<\tfrac{2}{3}A_{1/\sqrt{2}}^{1/\sqrt{2}%
}G^{1-1/\sqrt{2}}+\tfrac{1}{3}G \\
&<&\tfrac{16}{27}A_{3/4}^{3/4}G^{1/4}+\tfrac{11}{27}G<\tfrac{5}{9}A_{\sqrt{15%
}/5}^{\sqrt{15}/5}G^{1-\sqrt{15}/5}+\tfrac{4}{9}G<L \\
&<&\tfrac{1}{3}A+\tfrac{2}{3}G<\tfrac{1}{2}A_{1/\sqrt{3}}^{2/\sqrt{3}}G^{1-2/%
\sqrt{3}}+\tfrac{1}{2}G.
\end{eqnarray*}
\end{remark}

\begin{remark}
In \cite{Yang.JIA.2013.116}, Yang obtained a sharp lower bound $%
A_{q_{0}}^{1/\left( 3q_{0}\right) }G^{1-1/\left( 3q_{0}\right) }$ for the
logarithmic mean $L$, where $q_{0}=1/\sqrt{5}$, and pointed out that this
one seems to superior to most of known ones. Now, we derive a new sharp
lower bound $\tfrac{5}{9}A_{p_{1}}^{p_{1}}G^{1-p_{1}}+\tfrac{4}{9}G$ for $L$%
, where $p_{1}=\sqrt{15}/5$. We claim that the latter is better than the
former. In fact, we have%
\begin{equation}
L>\tfrac{5}{9}A_{p_{1}}^{p_{1}}G^{1-p_{1}}+\tfrac{4}{9}G>A_{q}^{1/\left(
3q\right) }G^{1-1/\left( 3q\right) }  \label{L comp.}
\end{equation}%
if and only if $q\geq q_{0}=1/\sqrt{5}$. In order for the second inequality
in (\ref{L comp.}) to hold, it suffices that for $x>0$%
\begin{equation*}
D\left( x\right) =\tfrac{1}{3p_{1}^{2}}\cosh p_{1}x+1-\tfrac{1}{3p_{1}^{2}}%
-\left( \cosh qx\right) ^{1/\left( 3q^{2}\right) }>0
\end{equation*}%
if and only if $q\geq q_{0}=1/\sqrt{5}$.

The necessity can be obtained by $\lim_{x\rightarrow 0}x^{-4}D\left(
x\right) \geq 0$, which follows by expanding in power series%
\begin{equation*}
D\left( x\right) =\frac{1}{72}x^{4}\left( p_{1}^{2}+2q^{2}-1\right) +o\left(
x^{6}\right) .
\end{equation*}%
Then, $q\geq \sqrt{\left( 1-p_{1}^{2}\right) /2}=1/\sqrt{5}$.

Since\ $q\mapsto \left( \cosh qx\right) ^{1/\left( 3q^{2}\right) }$ is
decreasing on $\left( 0,\infty \right) $ proved in \cite[Lemma 2]%
{Yang.JIA.2013.116}, to prove $D\left( x\right) \geq 0$ if $q\geq q_{0}$, it
suffices to show that $D\left( x\right) \geq 0$ when $q=q_{0}$.
Differentiation yields%
\begin{eqnarray*}
D^{\prime }\left( x\right) &=&\frac{1}{3p_{1}}\sinh p_{1}x-\frac{1}{3q_{0}}%
\cosh ^{1/\left( 3q_{0}^{2}\right) -1}q_{0}x\sinh q_{0}x \\
&=&\frac{\sinh q_{0}x}{3q_{0}}\left( \frac{q_{0}}{p_{1}}\frac{\sinh p_{1}x}{%
\sinh q_{0}x}-\cosh ^{1/\left( 3q_{0}^{2}\right) -1}q_{0}x\right) \\
&=&\frac{\sinh q_{0}x}{3q_{0}}\times L\left( \frac{q_{0}}{p_{1}}\frac{\sinh
p_{1}x}{\sinh q_{0}x},\cosh ^{1/\left( 3q_{0}^{2}\right) -1}q_{0}x\right)
\times D_{1}\left( x\right) ,
\end{eqnarray*}%
where%
\begin{equation*}
D_{1}\left( x\right) =\ln \left( \frac{q_{0}}{p_{1}}\frac{\sinh p_{1}x}{%
\sinh q_{0}x}\right) -\left( \frac{1}{3q_{0}^{2}}-1\right) \ln \cosh q_{0}x.
\end{equation*}%
Differentiating $D_{1}\left( x\right) $ gives%
\begin{equation*}
D_{1}^{\prime }\left( x\right) =\frac{D_{2}\left( x\right) }{6q_{0}\sinh
2q_{0}x\sinh p_{1}x},
\end{equation*}%
where%
\begin{eqnarray*}
D_{2}\left( x\right) &=&4(-\sinh p_{1}x\sinh ^{2}q_{0}x-3q_{0}^{2}\cosh
^{2}q_{0}x\sinh p_{1}x \\
&&+3q_{0}^{2}\sinh p_{1}x\sinh ^{2}q_{0}x+3p_{1}q_{0}\cosh p_{1}x\cosh
q_{0}x\sinh q_{0}x).
\end{eqnarray*}%
Utilizing "product into sum" formulas and expanding in power series lead to%
\begin{eqnarray*}
D_{2}\left( x\right) &=&-2\left( 6q_{0}^{2}-1\right) \sinh p_{1}x+\left(
3p_{1}q_{0}-1\right) \sinh \left( p_{1}x+2q_{0}x\right) \\
&&+\left( 3p_{1}q_{0}+1\right) \sinh \left( 2q_{0}x-p_{1}x\right) \\
&=&\sum_{n=1}^{\infty }d_{n}\tfrac{p_{1}^{2n-1}x^{2n-1}}{\left( 2n-1\right) !%
},
\end{eqnarray*}%
where%
\begin{equation*}
d_{n}=\left( 3p_{1}q_{0}-1\right) \left( 1+\tfrac{2q_{0}}{p_{1}}\right)
^{2n-1}+\left( 3p_{1}q_{0}+1\right) \left( \tfrac{2q_{0}}{p_{1}}-1\right)
^{2n-1}-2\left( 6q_{0}^{2}-1\right)
\end{equation*}%
Now we show that $d_{n}\geq 0$ for $n\geq 1$. A simple verification yields $%
d_{1}=d_{2}=0$, $d_{3}=64/45>0$. Suppose that $d_{n}>0$ for $n>3$, that is,%
\begin{equation*}
\left( 3p_{1}q_{0}+1\right) \left( \frac{2q_{0}}{p_{1}}-1\right)
^{2n-1}>2\left( 6q_{0}^{2}-1\right) -\left( 3p_{1}q_{0}-1\right) \left( 1+%
\frac{2q_{0}}{p_{1}}\right) ^{2n-1}.
\end{equation*}%
Then,%
\begin{eqnarray*}
d_{n+1} &=&\left( 3p_{1}q_{0}-1\right) \left( 1+\tfrac{2q_{0}}{p_{1}}\right)
^{2n+1}+\left( 3p_{1}q_{0}+1\right) \left( \tfrac{2q_{0}}{p_{1}}-1\right)
^{2n+1}-2\left( 6q_{0}^{2}-1\right) \\
&>&\left( 3p_{1}q_{0}-1\right) \left( 1+\tfrac{2q_{0}}{p_{1}}\right)
^{2n+1}+\left( 2\left( 6q_{0}^{2}-1\right) -\left( 3p_{1}q_{0}-1\right)
\left( 1+\tfrac{2q_{0}}{p_{1}}\right) ^{2n-1}\right) \\
&&\times \left( \frac{2q_{0}}{p_{1}}-1\right) ^{2}-2\left(
6q_{0}^{2}-1\right) \\
&=&\frac{8q_{0}}{p_{1}^{2}}\left( p_{1}\left( 3p_{1}q_{0}-1\right) \left( 1+%
\frac{2q_{0}}{p_{1}}\right) ^{2n-1}-\left( 6q_{0}^{2}-1\right) \left(
p_{1}-q_{0}\right) \right) .
\end{eqnarray*}%
Since $\left( 3p_{1}q_{0}-1\right) =\left( 3\sqrt{3}-5\right) /5>0$, using
binomial expansion we get%
\begin{eqnarray*}
\frac{p_{1}^{2}}{8q_{0}}d_{n+1} &>&p_{1}\left( 3p_{1}q_{0}-1\right) \left(
1+\left( 2n-1\right) \frac{2q_{0}}{p_{1}}\right) -\left( 6q_{0}^{2}-1\right)
\left( p_{1}-q_{0}\right) \\
&=&2q_{0}\left( 3p_{1}q_{0}-1\right) \left( 2n-1\right) +q_{0}\left(
3p_{1}^{2}+6q_{0}^{2}-6p_{1}q_{0}-1\right) \\
&=&2q_{0}\left( 3p_{1}q_{0}-1\right) \left( 2n-1\right) -2q_{0}\left(
3p_{1}q_{0}-1\right) \\
&=&4q_{0}\left( 3p_{1}q_{0}-1\right) \left( n-1\right) >0,
\end{eqnarray*}%
where the third equality holds due to $3p_{1}^{2}+6q_{0}^{2}=3$. By
mathematical induction, we have proven $D_{2}\left( x\right) \geq 0$ for $%
n\geq 1$. It follows that $D_{1}^{\prime }\left( x\right) >0$, which means
that $D_{1}$ is increasing on $\left( 0,\infty \right) $, and then $%
D_{1}\left( x\right) >\lim_{x\rightarrow 0^{+}}D\left( x\right) =0$. This in
turn implies that $D$ is increasing on $\left( 0,\infty \right) $, and
therefore, $D\left( x\right) \geq D\left( 0^{+}\right) =0$, which proves the
sufficiency.
\end{remark}


\begin{thebibliography}{99}
\bibitem{Huygens} C. Huygens, \emph{Oeuvres Completes 1888--1940}, Soci\'{e}%
te Hollondaise des Science, Haga.

\bibitem{Neuman.13.4.2010} E. Neuman and J. S\'{a}ndor, On some inequalities
involving trigonometric and hyperbolic functions with emphasis on the
Cusa-Huygens, Wilker and Huygens inaequalities, \emph{Math. Inequal. Appl.} 
\textbf{13}, 4 (2010), 715--723.

\bibitem{Sandor.RGMIA.8(3)(2005)} J. S\'{a}ndor and M. Bencze, On Huygens's
trigonometric inequality, \emph{RGMIA Research Report Collection} \textbf{8}%
, 3 (2005); available online at
http://www.ajmaa.org/RGMIA/papers/v8n3/Huygens.pdf.

\bibitem{Zhu.CMA.58(2009)} L. Zhu, A source of inequalities for circular
functions, \emph{Comput. Math. Appl.} \textbf{58} (2009), 1998-2004.

\bibitem{Chen.JIA.2011.136} C.-P. Chen and W.-S. Cheung, Sharp Cusa and
Becker-Stark inequalities, \emph{J. Inequal. Appl.} \textbf{2011} (2011):
136.

\bibitem{Mortitc.MIA.14(2011)} C. Mortitc, The natural approach of
Wilker-Cusa-Huygens inequalities, \emph{Math. Inequal. Appl}. \textbf{14}
(2011), 535-541.

\bibitem{Nueman.MIA.15.2.2012} E. Neuman, On Wilker and Huygens type
inequalities, \emph{Math. Inequal. Appl. }\textbf{15}, 2 (2012), 271--279.

\bibitem{Yang.arXiv.1206.5502.2012} Zh.-H. Yang, New sharp Jordan type
inequalities and their applications, arXiv:1206.5502 [math.CA],
http://arxiv.org/pdf/1206.5502v1.pdf.

\bibitem{Yang.JMI.7.4.2013} Zh.-H. Yang, Refinements of a two-sided
inequality for trigonometric functions, \emph{J. Math. Inequal.} \textbf{7},
4 (2013), 601-615.

\bibitem{Yang.MIA.17.2.2014} Zh.-H. Yang, Sharp bounds for Seiffert mean in
terms of weighted power means of arithmetic mean and geometric mean, \emph{%
Math. Inequal. Appl.} \textbf{17}, 2 (2014), 499--511.

\bibitem{Zhu.MIA.11.3.2008} L. Zhu, Some new inequalities for means in two
variables, \emph{Math. Inequal. Appl. }\textbf{11}, 3 (2008), 443--448.

\bibitem{Zhu.AAA.2009} L. Zhu, Some new Wilker-type inequalities for
circular and hyperbolic functions, \emph{Abstr. Appl. Anal.} \textbf{2009},
Art. ID 485842.

\bibitem{Zhu.CMA.58.2009.a} L. Zhu, Some new inequalities of the Huygens
type, \emph{Comput. Math. Appl.} \textbf{58} (2009), 1180--1182.

\bibitem{Zhu.JIA.2010.130821} L. Zhu, Inequalities for hyperbolic functions
and their applications, \emph{J. Inequal. Appl.} \textbf{2010}, Article ID
130821, 10 pages, doi: 10.1155/2010/130821.

\bibitem{Sandor.arXiv.1105.0859v1.2011} J. S\'{a}ndor, Trigonometric and
hyperbolic inequalities, arXiv: 1105.0859v1 [math.CA] 2 May \textbf{2011},
http://arxiv.org/pdf/1105.0859v1.pdf.

\bibitem{Neuman.AIA.1.1.2012} E. Neuman, Refinements and generalizations of
certain inequalities involving trigonometric and hyperbolic functions, \emph{%
Adv. Inequal. Appl.} \textbf{1}, 1 (2012), 1-11.

\bibitem{Wu.AML.25.5.2012} S.-H. Wu and L. Debnath, Wilker-type inequalities
for hyperbolic functions, \emph{Appl. Math. Lett.} \textbf{25}, 5 (2012),
837--842.

\bibitem{Neuman.AMC.218.2012} E. Neuman and J. S\'{a}ndor, Inequalities for
hyperbolic functions, \emph{Appl. Math. Comp. }\textbf{218} (2012),
9291--9295.

\bibitem{Yang.JIA.2013.116} Zh.-H. Yang, New sharp bounds for logarithmic
mean and identric mean, \emph{J. Inequal. Appl.} \textbf{2013} (2013): 116.

\bibitem{Chen.JMI.8.1.2014} Ch.-P. Chen and J. S\'{a}ndor, Inequality chains
for Wilker, Huygens and Lazarevi\'{c} type inequalities, \emph{J. Math.
Inequal.} \textbf{8}, 1 (2014), 55--67.

\bibitem{Yang.JIA.2013.541} Zh.-H. Yang, Three families of two-parameter
means constructed by trigonometric functions, \emph{J. Inequal. Appl.} 
\textbf{2013} (2013): 541.

\bibitem{Iyengar.6(1945)} K.S.K. Iyengar, B.S. Madhava Rao and T.S.
Nanjundiah, Some trigonometrical inequalities, \emph{Half-Yearly J. Mysore
Univ. Sect. B., N. S.} \textbf{6} (1945), 1-12.

\bibitem{Mitrinovic.Springer.1970} D. S. Mitrinovi\'{c}, \emph{Analytic
Inequalities}, Springer-Verlag, Berlin, 1970.

\bibitem{Qi.MIA.2(4)(1999)} F. Qi, L.-H. Cui and S.-L. Xu, \textit{Some
inequalities constructed by Tchebysheff's integral inequality}, \emph{Math.
Inequal. Appl.} \textbf{2}, 4 (1999), 517-528.

\bibitem{Klen.JIA.2010} R. Kl\'{e}n, M. Visuri and M. Vuorinen, On Jordan
type inequalities for hyperbolic functions, \emph{J. Inequal. Appl.} \textbf{%
2010} (2010), Art. ID 362548, 14 pages, doi:10.1155/2010/362548.

\bibitem{Yang.GJM.2.1.2014} Zh.-H. Yang, New sharp Jordan type inequalities
and their applications, \emph{Gulf J. Math.} \textbf{2}, 1 (2014), 1-10.

\bibitem{Lv.25(2012)} Y.-P. Lv, G.-D. Wang and Y.-M. Chu, A note on Jordan
type inequalities for hyperbolic functions, \emph{Appl. Math. Lett.}, 
\textbf{25} (2012), 505-508.

\bibitem{Lin.AMM.81.1974} T.-P. Lin, The power mean and the logarithmic
mean, \emph{Amer. Math. Monthly} \textbf{81 }(1974), 879--883.

\bibitem{Carlson.AMM.79.1972} B. C. Carlson, The logarithmic mean, \emph{%
Amer. Math. Monthly} \textbf{79 }(1972), 615-618.

\bibitem{Yang.BKMS.49.1.2012} The log-convexity of another class of
one-parameter means and its applications, \emph{Bull. Korean Math. Soc.} 
\textbf{49}, 1 (2012), 33--47.

\bibitem{Zhu.2009.JIA.379142} L. Zhu, Generalized Lazarevic's inequality and
its applications---Part II, \emph{J. Inequal. Appl.} \textbf{2009} (2009):
379142, doi: 10.1155/2009/379142.

\bibitem{Yang.CM.23.5.2007} Zh.-H. Yang, Reversed basic inequality and its
applications, College Mathematics, \textbf{23}, 5 (2007), 147--151. (Chinese)

\bibitem{Qi.12(4)(1996)} F. Qi, Extensions and sharpenings of Jordan's and
Kober's inequality,\ J\emph{ournal of Mathematics for Technology} \textbf{12}%
, 4 (1996), 98--102. (Chinese)

\bibitem{Wu.19(12)(2006)} Sh.-H. Wu and L. Debnath, A new generalized and
sharp version of Jordan's inequality and its applications to the improvement
of the Yang Le inequality,\ \emph{Appl. Math. Letters} \textbf{19}, 12
(2006), 1378--1384.

\bibitem{Wu.12(2)(2008)} Sh.-H. Wu, Sharpness and generelization of Jordan's
inequality and its application, \emph{Taiwanese J. Math.} \textbf{12}, 2
(2008), 325--336.

\bibitem{Catalan.31(7)} E. Catalan, Recherches sur la constante $G$, et sur
les int\'{e}grales eul\'{e}riennes, \emph{M\'{e}oires de l'Academie
imperiale des sciences de Saint-P\'{e}tersbourg, Ser.} \textbf{31}, 7 (1883).

\bibitem{Bradley.2001} D. M. Bradley, Representations of Catalan's constant,
2001, available online at
http://citeseerx.ist.psu.edu/viewdoc/summary?doi=10.1.1.26.1879.

\bibitem{Biernacki.9.1955} M. Biernacki and J. Krzyz, On the monotonicity of
certain functionals in the theory of analytic functions, \emph{Annales
Universitatis Mariae Curie-Sklodowska} \textbf{9} (1995) 135--147.

\bibitem{Carlson.78(1971)} B. C. Carlson, Algorithms involving arithmetic
and geometric means, \emph{Amer. Math. Monthly} \textbf{78} (1971), 496--505.

\bibitem{Neuman.14(2003)} E. Neuman and J. S\'{a}ndor, On the
Schwab-Borchardt mean, \emph{Math. Pannon.} \textbf{14} (2003), 253--266.

\bibitem{Neuman.JMI.5.4.2011} E. Neuman, Inequalities for the
Schwab-Borchardt mean and their applications., \emph{J. Math. Inequal.} 5, 4
(2011), 601--609.
\end{thebibliography}
\end{document}